\DeclareMathOperator*{\minimise}{minimise}
\DeclareMathOperator*{\maximise}{maximise}
\newtheorem{remark}{Remark}
\newtheorem{theorem}{Theorem}[section]
\newtheorem{lemma}[theorem]{Lemma}
\newtheorem{proof}[theorem]{Proof}
\begin{document}
%

\title{Arbitrarily Tight Bounds on a Singularly Perturbed Linear-Quadratic Optimal Control Problem}
%
%
%

\author{Sei~Howe, 
Panos~Parpas\thanks{S. Howe and P. Parpas are with the Department of Computer Science, Imperial College London, U.K e-mail: sei.howe11@imperial.ac.uk, panos.parpas@imperial.ac.uk. }
\thanks{Thanks to support from EPSRC grants EP/M028240, EP/K040723 and an FP7 Marie Curie Career Integration Grant (PCIG11-GA-2012-321698 SOC-MP-ES).}}%
\maketitle

\begin{abstract}
We calculate arbitrarily tight upper and lower bounds on an unconstrained control, linear-quadratic, singularly perturbed optimal control problem whose exact solution is computationally intractable.   It is well known that  for the aforementioned problem, an approximate solution $\bar{V}^N(\epsilon)$ can be constructed such that it is  asymptotically equivalent in $\epsilon$ to the solution $V(\epsilon)$ of the singularly perturbed problem in the sense that $|V(\epsilon)-\bar{V}^N(\epsilon)| =O(\epsilon^{N+1})$  for any integer $N\geq0$ as $\epsilon \rightarrow 0$.   For this approximation to be considered useful, the parameter $\epsilon$ is typically restricted to be in some sufficiently small set; however,   for values of $\epsilon$ outside this set, a poor approximation can result. We improve on this approximation by incorporating a duality theory into the singularly perturbed optimal control problem and derive an upper bound $\chi^N_u(\epsilon)$ and a lower bound  $\chi^N_l(\epsilon)$ of $V(\epsilon)$ that hold for arbitrary $\epsilon$ and, furthermore, satisfy the  inequality  $|\chi^N_u(\epsilon)-\chi^N_l(\epsilon)|=O(\epsilon^{N+1})$ for any integer $N \geq 0$ as $\epsilon \rightarrow 0$.   
\end{abstract}

\begin{IEEEkeywords}
Error bounds, asymptotic expansion, singular perturbation,  linear-quadratic, optimal control, Fenchel duality.
\end{IEEEkeywords}


\IEEEpeerreviewmaketitle

\section{Introduction}
\label{Introduction} 
\IEEEPARstart{S}{ingularly} 
perturbed optimal control (SPOC) problems are characterised by the presence of a small  parameter $\epsilon$ multiplying the highest derivative of some of the dynamics of the system. This parameter, known as a singular perturbation parameter, results in a system where some variables change at  a much faster rate than others; thus indicating that the system possesses a two time-scale separation.   This time-scale separation frequently leads to computational issues as it introduces stiffness into the optimal control problem.  Furthermore, these computational issues can often be compounded by the curse of dimensionality that arises in large-scale systems.  In order to counteract these two problems,  various authors have devised computationally feasible methods of obtaining an approximation to the solution.  In particular, for the unconstrained control, linear-quadratic, SPOC problem, \cite{OMalley3} - \cite{OMalleyKung} obtained a computational feasible approximation  $V^N(\epsilon)$ to the solution $V(\epsilon)$  satisfying the following bound \begin{equation} \label{old old result}
|V(\epsilon)-V^N(\epsilon)| =O(\epsilon^{N+1}), \hspace{4mm} \mbox{ as } \epsilon \rightarrow 0.
\end{equation}
Although the approximation in \eqref{old old result} holds for any $N\geq 0$, the singular perturbation parameter is restricted to be in some sufficiently small set for the purpose of obtaining a useful approximation.  In this paper, we improve on the result in \eqref{old old result} by determining an upper bound $\chi^N_u(\epsilon)$ and a lower bound $\chi^N_l(\epsilon)$ on $V(\epsilon)$ that hold for arbitrary values of $\epsilon$  and, furthermore, satisfy the bound
\begin{eqnarray}\label{old result}
|\chi^N_l(\epsilon)-\chi_u^N(\epsilon)|=O( \epsilon^{N+1}), \hspace{4mm} \mbox{ as } \epsilon \rightarrow 0.\end{eqnarray}
In particular, our result allows the practitioner to determine a region in which the solution is contained for values of $\epsilon$ that do not necessarily fall within some sufficiently small set but where the solution is still impractical to compute numerically.   

There exists  a vast array of physical problems that fit the linear-quadratic, unconstrained control, SPOC structure  but with an  $\epsilon$ parameter that is considerably larger than what may be considered to be sufficiently small.   A simply supported beam example is considered in \cite{Hsieh} and power systems are considered in \cite{Arkun} both with $\epsilon=0.1$. Flight control systems are considered in \cite{Brumbaugh}, \cite{Nguyen} and \cite{Shin} with $\epsilon$ set as $0.2$, $0.336$ and $0.0424$ respectively.    More recently, many consensus network and graph aggregation problems have been considered in \cite{Boker}, \cite{Boker2}, \cite{Chow}, \cite{Ishii} and \cite{Rejab} for various values of $\epsilon$. For such problems, an asymptotic result of the form \eqref{old old result} may be inadequate as an approximation. 

As our methodology provides definitive bounds on the solution for all values of $\epsilon$, it both increases the amount of information available when considering the implementation of an approximate solution and  produces a criterion for determining how good of an approximation  $V^N$ yields. While the authors' previous work in \cite{SeiPanos} established upper and lower bounds on the solution to  a control constrained, linear-quadratic SPOC problem of the form
\begin{align*}
|\chi_l(\epsilon)-\chi_u(\epsilon)|=O( \epsilon), \hspace{4mm} \mbox{ as } \epsilon \rightarrow 0,
\end{align*}
to the extent of the authors knowledge, the derivation of arbitrarily tight upper and lower bounds satisfying \eqref{old result}  on the solution to a unconstrained control, linear-quadratic SPOC problem  and a criteria to evaluate an asymptotically optimal approximation have never been previously considered.  

As the optimal control problem that we consider is a minimisation problem,  an upper bound on the solution is easily found by evaluating the problem with any feasible control.   By evaluating the linear dynamics of the problem with an asymptotic expansion of the optimal control obtained using a reduced dimension problem,  one obtains an arbitrarily tight upper bound.  An arbitrarily tight lower bound, however, has been more difficult to obtain due to a lack of a duality framework in which to formulate the SPOC problem and, moreover, a lack of a strong duality property that ensures that any arbitrarily tight lower bound to the dual problem will also be an arbitrarily tight lower bound to the primal problem.  

In this paper, we apply the duality construction in \cite{Alt} and \cite{Burachik} to the case of SPOC problems and derive a dual problem with the strong duality property.    From duality theory, it follows that the dual problem evaluated with any feasible control will provide a lower bound on the  solution of the SPOC problem.     In order to obtain an arbitrarily tight lower bound, we use the strong duality result and the asymptotic expansion  of the optimal control of the primal problem  to construct an asymptotic expansion of the optimal  control of the dual problem.  By evaluating the dual problem with the constructed control, we obtain an arbitrarily tight lower bound to the optimal control problem.   

We illustrate our results with three examples: one relating to aircraft control  and two over a clustered consensus network.   In the first two examples, we present the solution to the optimal control problem, the approximate solution $V^N$ obtained in \cite{OMalley3}-\cite{OMalleyKung}, and our upper and lower bounds.  In the third problem we consider, the solver was not able to obtain the solution to the control problem; however, we were able to obtain both a reduced solution and bounds. We show that in all cases, the upper and lower bounds can provide a better approximation to the solution than $V^N$. In particular, for the aircraft example, we show that our upper and lower bounds provide a better approximation for $\epsilon <0.035$ and for the consensus network examples, we show that for $\epsilon =0.25$ and $\epsilon=0.0125$, where $\epsilon$ has been determined by the network topology,  the difference between the upper and lower bounds is $0.173$  and $0.0284$ respectively  and, in both cases, $V^N$ lies outside of these bounds.  From our results, it is clear that one now has a method for determining whether the approximation $V^N$ is  adequate for the purposes of the problem and, furthermore, if it is not adequate, a method of obtaining a better approximation to the solution.  

 In the following section we outline the singularly perturbed optimal control problem under consideration and present its dual formulation.  In  Section \ref{main results} we present our main theorems relating to the arbitrarily tight upper and lower bounds satisfying \eqref{old result}. Section \ref{construction} provides a brief description of the construction of the asymptotic expansion of the optimal control of our problem. The proofs of the main theorems are presented in Section \ref{main proofs} and the construction of the dual problem is presented in Section \ref{dual construction}. Finally, in Section \ref{numerical}, we provide our three examples.

 \section{Formulation of the primal and dual problems}
We consider the following  problem 
\begin{align*} \tag{\textbf{P}} \begin{cases} \underset{\hat{u}}\minimise&\displaystyle J_\bold{P}(\hat{z}_1,\hat{z}_2,\hat{u},\epsilon) ,
\\
\mbox{subject to} &
 \displaystyle \hspace{0.5mm}\frac{d\hat{z}_1}{dt}\hspace{1mm}=A_{11}\hat{z}_1+A_{12}\hat{z_2}+b_1\hat{u}, \vspace{2mm}\\
& \displaystyle\epsilon \frac{d\hat{z}_2}{dt}=A_{21}\hat{z}_1+A_{22}\hat{z}_2+b_2\hat{u},\\
& \hat{z}_1(0,\epsilon) = z_{1,0}(\epsilon),  \hspace{4mm} \hat{z}_2(0,\epsilon) = z_{2,0}(\epsilon),   \end{cases}
\end{align*}
for $ \epsilon \in (0,\epsilon^*]$, where the functional $J_\bold{P}$ is defined as
\begin{equation} \label{jp} 
J_\bold{P}= \frac{1}{2}\int_{0}^{1}  \hspace{-1.5mm}\hat{z}^TQ\hat{z}+\hat{u}^TR\hat{u} \hspace{1mm} \mathrm{d}t  + \frac{1}{2} \hat{z}(1,\epsilon)^T \pi(\epsilon) \hat{z}(1,\epsilon).
\end{equation}
and $\hat{z}^T = \begin{bmatrix} \hat{z}_1^T, \hat{z}_2^T\end{bmatrix}$.
We let $V_\bold{P}(\epsilon)$ denote the value of $J_\bold{P}$ evaluated at the optimal control, denoted by $u$, for fixed $\epsilon$.    The matrices $Q$, $R$, $A_{ij}$, $b_i$ for $i,j=1,2$ may depend on both $t$ and $\epsilon$. 
 For all  $ \epsilon \in (0,\epsilon^*]$,  $\hat{z}_1 \in   W ^{1,2} ([0,1];\mathds{R}^m )$, $\hat{z}_2~\in~ W ^{1,2} ([0,1];\mathds{R}^n)$,  and $\hat{u} \in W^{1,2}([0,1];\mathds{R}^k)$ as functions of $t$, where the space $W^{1,2}$ denotes the Sobelov space of absolutely continuous functions.  Note that for $\epsilon=0$, the dimension of the problem \textbf{P} drops from $m+n$ to $m$ and the  boundary condition for $\hat{z}_{2}$ may no longer be satisfied.  
 
The following standard assumptions are imposed on $\bold{P}$:
 \begin{itemize}
 \item[(a)] The matrix $A_{22}$ is negative definite for all $t\in[0,1]$, $\epsilon \in [0,\epsilon^*]$, 
\item[(b)] For any fixed $\epsilon \in [0, \epsilon^*]$, the matrices  $Q$, $R$, $A_{ij}$, and $b_i$ for $i,j=1,2$,
 are smooth for $t\in [0,1]$, 
 \item[(c)] $Q$, $R$, $\pi$, $A_{ij}$, and $b_i$, for $i,j=1,2$, all have an asymptotic expansion in $\epsilon$ which is valid over their respective domains.  
 \item[(d)] $R$ is positive definite  for all $t\in[0,1]$, $\epsilon \in [0,\epsilon^*]$,
  \item[(e)]  For $\epsilon\in [0,\epsilon^*]$, $\pi$ has the following block-diagonal structure
\begin{equation*}\label{pi}
\pi(\epsilon)=\begin{bmatrix}\pi_{11}(\epsilon) & \epsilon \pi_{12}(\epsilon) \\ \epsilon \pi_{12}(\epsilon)^T& \epsilon \pi_{22}(\epsilon) \end{bmatrix},
\end{equation*}
 where $\pi_{11}\in \mathds{R}^{m\times m},\hspace{1mm} \pi_{22}\in \mathds{R}^{n\times n}$,
  \item[(f)] $Q$ and $\pi$ are positive semi-definite for all $t\in[0,1]$, $\epsilon \in [0,\epsilon^*]$,
   \item[(g)] The eigenvalues of 
\begin{equation*}
G(t)=\begin{bmatrix} A_{22}^0(t) & -b_2^0(t)R^0(t)^{-1}b_2^0(t)^T\\ -Q_{22}^0(t) & -A_{22}^0(t)^T \end{bmatrix},
\end{equation*}
have non-zero real parts on $[0,1]$, where the superscript $0$ denotes the first term in the asymptotic expansion in $\epsilon$ of the appropriate matrix,
\item[(h)] There exists a non-singular matrix 
\begin{align} \label{tmatrix}
T(t)= \begin{bmatrix} T_{11}(t) & T_{12}(t) \\ T_{21}(t) & T_{22}(t)\end{bmatrix},
\end{align}
 such that 
\begin{align}\label{tlambda}
T(t)^{-1}G(t)T(t)= \begin{bmatrix} -\Lambda(t) &0 \\0&\Lambda(t)\end{bmatrix},
\end{align}
with all eigenvalues of $\Lambda(t)$ having positive real parts on $[0,1]$ and such that the matrices 
\begin{align*} \begin{split}
T_{11}(0), \hspace{4mm}T_{22}(1) - \pi_{22}^0T_{12}(1),
\end{split}
\end{align*}
are non-singular.

 \end{itemize}

 The assumptions $(a)-(h)$ are consistent with the assumptions in \cite{OMalleyKung}.     We further impose the condition that $Q$ and $\pi$ are positive definite and symmetric  which is necessary for the construction of the dual problem and the proof of strong duality.
The feasible set for $\textbf{P}$ can be written as
\begin{align}
\begin{split} \label{feasible set}
\Sigma&= \bigg{\{}(\hat{z},\hat{u}) : \hat{z} \in W^{1,2}([0,1];\mathds{R}^{m+n}) ,\hat{z}(0,\epsilon)=z_0(\epsilon),   \\& I^{\epsilon}\frac{d\hat{z}}{dt} =A\hat{z}+ b\hat{u},\hspace{2mm} t\in [0,1] , \hspace{2mm} \epsilon \in (0,\epsilon^*]\bigg{\}},
\end{split}
\end{align}
where $z_0^T=[z_{1,0}^T,z_{2,0}^T]$ and
\begin{align*}
A&=\begin{bmatrix} A_{11} &A_{12} \\ A_{21}& A_{22} \end{bmatrix}, \hspace{1mm} A_{11} \in \mathds{R}^{m\times m}, \hspace{1mm} A_{22} \in \mathds{R}^{n\times n}, \\
b &= \begin{bmatrix} b_1\\  b_2 \end{bmatrix}, \hspace{13mm} b_1\hspace{2mm}\in\mathds{R}^{m\times k},\hspace{2mm} b_2\hspace{2mm} \in \mathds{R}^{n\times k},
\end{align*}
\begin{align*}
I^\epsilon&=\begin{bmatrix} I_{m}&0\\ 0 &\epsilon I_n\end{bmatrix}.
\end{align*}
and $I_j$ is the $j\times j$ identity matrix for $j=m,n$. 

\begin{remark} \label{unique solution primal}
We assume the set $\Sigma$ is nonempty. Since the solution set $\Sigma$ is closed and convex and $J_\bold{P}$ is strictly convex, continuous and coercive over $\Sigma$, there exists a unique solution to  the minimisation problem $\bold{P}$ \cite{EkelandTemam}.
\end{remark}
 The construction of the dual problem is based on that of the unperturbed case \cite{Alt} and \cite{Burachik}.
The dual problem can be formulated as

\begin{align*} \tag{\textbf{D}} \begin{cases} \underset{\hat{\rho}_1,\hat{\rho}_2,\hat{\gamma}_1,\hat{\gamma}_2}\maximise&\displaystyle J_\bold{D}(\hat{\rho}_1,\hat{\rho}_2,\hat{\gamma}_1,\hat{\gamma}_2,\epsilon),
\\
\mbox{subject to} & \displaystyle \hspace{1mm}  \frac{d\hat{\gamma}_1}{dt}=-A_{11}^T\hat{\gamma}_1 -A_{21}^T \hat{\gamma}_2+\hat{\rho}_1,\vspace{2mm}\\
& \displaystyle \epsilon \frac{d\hat{\gamma}_2}{dt}=-A_{12}^T\hat{\gamma}_1-A_{22}^T\hat{\gamma}_2+\hat{\rho}_2, \end{cases}
\end{align*}
where the functional $J_{\textbf{D}}$ is given by 
\begin{align}\label{jd} \begin{split}
J_\bold{D}&= \frac{1}{2} \int_0^1\hspace{-2mm} -\hat{\rho}^TQ^{-1}\hat{\rho}-\hat{\gamma}^TbR^{-1}b^T\hat{\gamma} \mathrm{d}t - \hat{\gamma}(0,\epsilon)^T I^\epsilon z_0(\epsilon)\\
&-\frac{1}{2} \hat{\gamma}(1,\epsilon)^TI^\epsilon\pi(\epsilon)^{-1}I^\epsilon \hat{\gamma}(1,\epsilon), \end{split}
\end{align}
and $\hat{\rho}^T=\begin{bmatrix} \hat{\rho}_1^T, \hat{\rho}_2^T \end{bmatrix} $, $\hat{\gamma}^T=\begin{bmatrix}\hat{\gamma}_1^T,\hat{\gamma}_2^T \end{bmatrix}$.   We let $V_\bold{D}(\epsilon)$ denote the value of $J_\bold{D}$ evaluated at the optimal control, denoted by $\rho$, for fixed $\epsilon$. For all  $ \epsilon ~\in~ (0,\epsilon^*]$,  $\hat{\gamma}_1, \hat{\rho}_1\in W^{1,2}([0,1];\mathds{R}^{m})$, and  $\hat{\gamma}_2, \hat{\rho}_2~\in~ W^{1,2}([0,1];\mathds{R}^{n})$ as functions of $t$. 
 The feasible set for $\textbf{D}$ can be written as
 \begin{align*}
 \Sigma_1\hspace{-1mm}=&\hspace{-1mm} \bigg{\{}\hspace{-0.8mm}(\hat{\gamma}, \hat{\rho} )\hspace{-0.5mm}: \hspace{-0.5mm}\hat{\gamma},\hat{\rho} \in W^{1,2}([0,1]; \mathds{R}^{m+n}),\\& I^{\epsilon} \frac{d\hat{\gamma}}{dt} \hspace{-1mm}=\hspace{-1mm}- A^T\hat{\gamma}\hspace{-0.5mm}+\hspace{-0.5mm}\hat{\rho}, t\in\hspace{-0.5mm}[0,1], \epsilon \in \hspace{-0.5mm}(0,\epsilon^*] \hspace{-0.5mm}\bigg{\}}.
 \end{align*}
\begin{remark} \label{unique solution dual}
 The objective functional $J_\bold{D}$ is not necessarily coercive over the feasible set $\Sigma_1$; hence, we  cannot immediately conclude that a unique solution exists.  However, as there exists a unique solution to $\bold{P}$ by Remark \ref{unique solution primal}, the strong duality result in Section \ref{main results} will lead to the existence of a unique solution for $\bold{D}$.
\end{remark}

\section{Main results} \label{main results} 
In this section, we detail three theorems which compose our main results.  Theorem \ref{expansionomalley}  states that the optimal control of $\bold{P}$ admits an asymptotic expansion as $\epsilon \rightarrow 0$, Theorem \ref{strong duality}  states that strong duality, which is necessary for obtaining an arbitrarily tight lower bound to $\bold{P}$, holds, and Theorem \ref{main theorem} constructs the arbitrarily tight upper and lower bounds on the solution.  

 \begin{theorem}
\label{expansionomalley}
The optimal control $u$ of $\bold{P}$ has an asymptotic expansion of the form
\begin{align} \label{un1}
u(t,\epsilon) & =u^N(t,\epsilon)+O(\epsilon^{N+1}), \hspace{4mm} \mbox{ as } \epsilon \rightarrow 0,
\end{align}
for any integer $N \geq 0$, uniformly on $[0,1]$.  
\end{theorem}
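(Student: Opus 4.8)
The plan is to convert the optimal control problem \textbf{P} into a singularly perturbed two-point boundary value problem through its first-order optimality conditions, and then to invoke the matched-asymptotic-expansion machinery of O'Malley and Kung \cite{OMalleyKung}, whose hypotheses are precisely the standing assumptions (a)--(h). Since $J_{\textbf{P}}$ is strictly convex and the constraints defining $\Sigma$ are affine (Remark \ref{unique solution primal}), the Pontryagin conditions are both necessary and sufficient, so it suffices to analyse the asymptotics of the unique solution of that boundary value problem.

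First I would form the Hamiltonian of \textbf{P}, introduce costates $p_1,p_2$ conjugate to $\hat z_1,\hat z_2$, and eliminate the control through the stationarity condition to obtain $u=-R^{-1}(b_1^T p_1+b_2^T p_2)$. Substituting this back yields a linear Hamiltonian boundary value problem in $(\hat z_1,\hat z_2,p_1,p_2)$ carrying the same $I^\epsilon$ singular structure on both state and costate, together with the initial conditions $\hat z(0,\epsilon)=z_0(\epsilon)$ and transversality conditions at $t=1$ determined by the terminal cost and the block-diagonal structure of $\pi$ in assumption (e). The fast subsystem of this problem, obtained by freezing the slow variables and collecting the equations for the fast state $\hat z_2$ and its costate, is governed at leading order precisely by the matrix $G(t)$ of assumption (g): its $(1,1)$ block $A_{22}^0$ is the fast dynamics, the $(1,2)$ block $-b_2^0(R^0)^{-1}(b_2^0)^T$ comes from the eliminated control, the $(2,1)$ block $-Q_{22}^0$ is the running-cost Hessian, and the $(2,2)$ block $-(A_{22}^0)^T$ is the adjoint dynamics.

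Next I would check that (a)--(h) are exactly the hypotheses \cite{OMalleyKung} requires. Assumption (g) furnishes the exponential dichotomy (no eigenvalues of $G(t)$ on the imaginary axis), assumption (h) supplies the block-diagonalising transformation $T(t)$ separating the decaying and growing modes, and the non-singularity of $T_{11}(0)$ and $T_{22}(1)-\pi_{22}^0 T_{12}(1)$ guarantees that the boundary-layer corrections at $t=0$ and $t=1$ are uniquely determined and can be matched to the outer solution. The O'Malley--Kung theorem then produces an outer (reduced) expansion together with decaying boundary-layer terms at each endpoint whose partial sum $(\hat z^N,p^N)$ approximates the exact solution of the boundary value problem to order $O(\epsilon^{N+1})$ uniformly on $[0,1]$. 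Setting $u^N:=-R^{-1}(b_1^T p_1^N+b_2^T p_2^N)$ and using the smoothness and uniform boundedness of $R^{-1}$ and $b_i$ from assumptions (b) and (d) transfers this estimate to the control, yielding \eqref{un1}.

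The main obstacle is establishing that the remainder is $O(\epsilon^{N+1})$ \emph{uniformly} in $t$ rather than merely in the outer region. This rests entirely on the exponential dichotomy of the fast Hamiltonian subsystem: the eigenvalue condition of (g) and the splitting of (h) ensure that the corrections decay like $e^{-ct/\epsilon}$ near $t=0$ and $e^{-c(1-t)/\epsilon}$ near $t=1$ and do not contaminate one another, so that the residual remaining after subtracting the first $N+1$ terms can be bounded uniformly on the whole interval. A secondary technical point is the $\epsilon$-dependence of the data itself: assumption (c) guarantees that every coefficient matrix possesses its own asymptotic expansion, and these expansions must be inserted consistently at each order before the terms are collected and the matching is performed, so that the construction of $u^N$ is self-consistent to the claimed order.
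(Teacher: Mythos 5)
Your proposal takes essentially the same route as the paper: it reduces $\bold{P}$ to the singularly perturbed Hamiltonian two-point boundary value problem \eqref{p opt cond}--\eqref{p bound cond1} (necessary and sufficient by strict convexity), recovers the control from the co-states via \eqref{controlu}, and invokes the matched-asymptotic-expansion result of O'Malley and Kung under assumptions (a)--(h), with the outer solution plus exponentially decaying initial and terminal boundary-layer corrections giving the uniform $O(\epsilon^{N+1})$ remainder in \eqref{un1}. Your reading of the roles of assumptions (g) and (h) --- the dichotomy of $G(t)$ and the block-diagonalising $T(t)$ with the non-singularity of $T_{11}(0)$ and $T_{22}(1)-\pi_{22}^0T_{12}(1)$ fixing the layer corrections --- coincides with the construction the paper outlines in Section \ref{construction} and delegates to \cite{OMalley} and \cite{OMalleyKung}.
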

The proof and construction of the expansion are contained within \cite{OMalley} and \cite{OMalleyKung}.   We will briefly outline the method for obtaining the $u^N$ term for any integer $N\geq 0$ in Section \ref{construction}. 
 
 \begin{theorem}\label{strong duality} 
The solution $V_{\bold{P}}(\epsilon)$ of the primal problem $\bold{P}$ and solution $V_{\bold{D}}(\epsilon)$ of the dual problem $\bold{D}$ satisfy the following equality
 \begin{equation*}
 V_{\bold{P}}(\epsilon)=V_{\bold{D}}(\epsilon), \hspace{2mm} \mbox{ for  } \epsilon \in (0,\epsilon^*] .
 \end{equation*}
\end{theorem}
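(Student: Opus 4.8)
The plan is to prove the theorem in two stages: first establish weak duality, $V_\bold{D}(\epsilon)\le V_\bold{P}(\epsilon)$ for every $\epsilon\in(0,\epsilon^*]$, and then exhibit a dual-feasible point at which $J_\bold{D}$ attains $V_\bold{P}(\epsilon)$, thereby closing the gap.

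For weak duality, the key device is an integration-by-parts identity coupling the primal and dual dynamics. For any primal-feasible $(\hat z,\hat u)\in\Sigma$ and dual-feasible $(\hat\gamma,\hat\rho)\in\Sigma_1$, differentiating the pairing $\hat\gamma^T I^\epsilon\hat z$ and substituting the two state equations causes the terms containing $A$ to cancel, leaving
\begin{equation*}
\frac{d}{dt}\big(\hat\gamma^T I^\epsilon\hat z\big)=\hat\rho^T\hat z+\hat\gamma^T b\hat u .
\end{equation*}
Since elements of $W^{1,2}$ are absolutely continuous, I integrate this over $[0,1]$ to relate the boundary values $\hat\gamma(1)^T I^\epsilon\hat z(1)$ and $\hat\gamma(0)^T I^\epsilon\hat z(0)$ to $\int_0^1(\hat\rho^T\hat z+\hat\gamma^T b\hat u)\,dt$. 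I then bound each quadratic block of $J_\bold{P}$ against the matching block of $J_\bold{D}$ using the Fenchel--Young inequality $\tfrac12 x^T M x+\tfrac12 y^T M^{-1}y\ge \pm\, x^T y$, valid for $M\succ0$; this is exactly where the added hypothesis that $Q$ and $\pi$ are positive definite (so that $Q^{-1}$ and $\pi^{-1}$ exist) is needed. Applying this to the $Q$, $R$ and terminal $\pi$ terms and inserting the coupling identity, the boundary contributions telescope and, using $\hat z(0)=z_0$, all cross terms cancel, giving $J_\bold{P}(\hat z,\hat u,\epsilon)\ge J_\bold{D}(\hat\gamma,\hat\rho,\epsilon)$; passing to the infimum over $\Sigma$ and the supremum over $\Sigma_1$ yields weak duality.

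To obtain the reverse inequality I construct a dual-feasible point from the primal optimiser. Let $(z,u)$ be the unique minimiser of $\bold{P}$ (Remark~\ref{unique solution primal}). Since $\bold{P}$ is convex with affine dynamics, $(z,u)$ satisfies the associated first-order optimality system: there is a costate $\lambda\in W^{1,2}$ solving an adjoint equation of the form $I^\epsilon\dot\lambda=-A^T\lambda+Q z$ with transversality condition $I^\epsilon\lambda(1)=-\pi z(1)$, and the control obeys $R u=b^T\lambda$. I then set $\hat\gamma=\lambda$ and $\hat\rho=Q z$; the adjoint equation is precisely the dual constraint, so $(\hat\gamma,\hat\rho)\in\Sigma_1$. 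With this choice each Fenchel--Young inequality above holds with equality, since the equality conditions $\hat\rho=Q z$, $b^T\hat\gamma=R u$ and $I^\epsilon\hat\gamma(1)=-\pi z(1)$ are all met. The weak-duality chain therefore collapses to $J_\bold{D}(\hat\gamma,\hat\rho,\epsilon)=J_\bold{P}(z,u,\epsilon)=V_\bold{P}(\epsilon)$, so $V_\bold{D}(\epsilon)\ge V_\bold{P}(\epsilon)$, which combined with weak duality gives the claimed equality; this also settles the existence of a solution to $\bold{D}$ anticipated in Remark~\ref{unique solution dual}.

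The main obstacle I anticipate is the careful bookkeeping of the boundary data under the singular scaling $I^\epsilon$: the transversality condition and the terminal penalty must be matched through the factor $I^\epsilon$ and the block structure of $\pi$ in assumption~(e), and I must verify that the constructed costate genuinely lies in $W^{1,2}$ so that the integration by parts and the dual-feasibility check are legitimate (this uses the smoothness of $Q$, $A$, $R$ in assumption~(b) together with $z\in W^{1,2}$). A secondary point needing justification is that the convex problem $\bold{P}$ indeed admits such a costate, i.e.\ that the first-order conditions are both necessary and sufficient here; alternatively one may bypass the explicit construction and invoke a perturbation-based strong-duality theorem for convex problems with affine constraints \cite{EkelandTemam}, verifying the requisite constraint qualification from the nonemptiness of $\Sigma$.
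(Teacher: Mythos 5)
Your proposal is correct and follows essentially the same route as the paper's proof: both arguments close the duality gap by constructing a dual-feasible pair from the primal first-order optimality system (your costate $\lambda$ is exactly the paper's $\hat{\gamma}=-\chi$, together with $\hat{\rho}=Qz$) and both rely on the same integration-by-parts coupling of the primal and dual dynamics under the scaling $I^\epsilon$. The only differences are presentational: the paper cites weak duality from \cite{EkelandTemam} and verifies the zero gap by substituting the constructed point into $J_{\bold{D}}$ and evaluating directly, whereas you prove weak duality from scratch via Fenchel--Young inequalities and then obtain the zero gap from the equality cases of those same inequalities, which is a legitimate and slightly more self-contained packaging of the identical idea.
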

The strong duality property in Theorem \ref{strong duality}  is crucial for  obtaining the  upper and lower bounds in equation \eqref{old result} as it implies that an arbitrarily tight lower bound for $V_{\bold{D}}(\epsilon)$ will also be an arbitrarily tight lower bound for $V_\bold{P}(\epsilon)$. Furthermore, as the dual objective functional $J_\bold{D}$ is  not necessarily coercive over its feasible set, the strong duality result, along with the existence of a unique solution to $\bold{P}$,  will imply that a unique solution exists for $\bold{D}$. 
\begin{theorem} \label{main theorem}\begin{enumerate}[label=(\alph*)]
\item The control $u^N(t,\epsilon)$ given in \eqref{un1} provides an asymptotically optimal upper bound to the solution of $\bold{P}$ in the sense that
 \begin{align}\label{pvjepsilon}
 V_\bold{P}(\epsilon)&=J_\bold{P}(u^N,\hat{z}^N,\epsilon)+O(\epsilon^{N+1}), 
 \end{align}
 where
 \begin{align*}
 & O(\epsilon^{N+1}) <0 \hspace{2mm}\mbox {as}\hspace{2mm} \epsilon \rightarrow 0^+,
 \end{align*}
for any integer $N\geq0$. 

 In \eqref{pvjepsilon}, $V_{\bold{P}}(\epsilon)$ is the solution to $\bold{P}$, $J_\bold{P}$ is the objective functional in \eqref{jp}, and  $\hat{z}^N$ is the state satisfying the differential equations  and boundary conditions in $\bold{P}$ with control given by $u^N$.  
\item Consider the following control 
 \begin{align}
\label{bold lambda control}
\hat{\rho}^N(t,\epsilon) &= Q(t,\epsilon)\hat{z}^N(t,\epsilon),
 \end{align} 
 where $\hat{z}^N$ is the state satisfying the differential equations and boundary conditions in $\bold{P}$ with control given by $u^N$.
The control $\hat{\rho}^N$ provides an asymptotically optimal lower bound to the solution of $\bold{D}$ in the sense that 
 \begin{align} \label{o epsilon dual}
 V_\bold{D}(\epsilon)= J_\bold{D}(\hat{\rho}^N,\hat{\gamma}^N,\epsilon )+ O(\epsilon^{N+1}),
 \end{align}
 where
 \begin{align*}
 O(\epsilon^{N+1}) >0 \hspace{2mm}\mbox {as}\hspace{2mm} \epsilon \rightarrow 0^+,
 \end{align*}
   for any integer $N\geq0$.   In \eqref{o epsilon dual}, $V_{\bold{D}}(\epsilon)$ is the solution to $\bold{D}$, $J_\bold{D}$ is the objective functional in \eqref{jd}, and  $\hat{\gamma}^N$ satisfies the differential equations in \textbf{D} with control  given by $\hat{\rho}^N$ and boundary condition
 \begin{align}\label{gamma1}
 \hat{\gamma}^N(1,\epsilon) = -I^{\frac{1}{\epsilon}} \pi(\epsilon) Q(1,\epsilon)^{-1} \hat{\rho}^N(1,\epsilon).
 \end{align}
\item The following inequality and asymptotic result holds \begin{align*} 
&J_{\bold{D}} (\hat{\rho}^N, \hat{\gamma}^N,\epsilon) \leq V_{\bold{D}}(\epsilon) = V_{\bold{P}} (\epsilon) \leq J_{\bold{P}}(u_N,\hat{z}^N,\epsilon), \\&\bigg{|}J_{\bold{P}} (u^N,\hat{z}^N,\epsilon)-J_{\bold{D}}(\hat{\rho}^N, \hat{\gamma}^N, \epsilon )\bigg{|} =O(\epsilon^{N+1}),
 \end{align*}
 as $\epsilon \rightarrow 0$ with $\hat{z}^N$ and $\hat{\gamma}^N$ as in Theorem 2, parts (a) and (b) respectively.    
 \end{enumerate}
 \end{theorem}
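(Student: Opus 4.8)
The plan is to treat parts (a) and (b) as two instances of a single variational principle---comparing a convex (respectively concave) quadratic functional, evaluated at a feasible near-optimal point, against its value at the true optimum---and then to glue them together with strong duality (Theorem~\ref{strong duality}) to obtain part (c). The sign information in each part will fall out of feasibility plus optimality, while the $O(\epsilon^{N+1})$ orders will come from the quadratic structure together with uniform control/state/costate perturbation estimates.

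For part (a) I would first note that $(\hat{z}^N,u^N)$ is feasible: $u^N\in W^{1,2}$ and $\hat{z}^N$ is defined to solve the dynamics of $\bold{P}$ with the prescribed boundary data, so $(\hat{z}^N,u^N)\in\Sigma$. Minimality then gives $J_{\bold{P}}(u^N,\hat{z}^N,\epsilon)\geq V_{\bold{P}}(\epsilon)$, which is exactly the required negative sign of the remainder in \eqref{pvjepsilon}. Writing $z$ for the optimal primal state, the constraints defining $\Sigma$ are affine, so $\Sigma$ is an affine subspace and $(\delta z,\delta u):=(\hat{z}^N-z,\,u^N-u)$ is a direction tangent to $\Sigma$; since $(z,u)$ minimizes the quadratic $J_{\bold{P}}$ over $\Sigma$, the first variation along $(\delta z,\delta u)$ vanishes and the Taylor expansion of the quadratic terminates exactly as
\begin{equation*}
J_{\bold{P}}(u^N,\hat{z}^N,\epsilon)-V_{\bold{P}}(\epsilon)=\tfrac12\,\mathcal{Q}_{\bold{P}}[\delta z,\delta u]\geq 0,
\end{equation*}
where $\mathcal{Q}_{\bold{P}}$ is the (positive semidefinite) Hessian form built from $Q$, $R$ and $\pi$. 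By Theorem~\ref{expansionomalley}, $\delta u=O(\epsilon^{N+1})$ uniformly; propagating this through the linear dynamics---using assumption~(a) to control the fast $z_2$ component and the matched data $\delta z(0)=0$---gives $\delta z=O(\epsilon^{N+1})$ uniformly on $[0,1]$, whence $\mathcal{Q}_{\bold{P}}[\delta z,\delta u]=O(\epsilon^{N+1})$.

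For part (b), feasibility is arranged by construction: given $\hat{\rho}^N=Q\hat{z}^N$ from \eqref{bold lambda control}, the costate $\hat{\gamma}^N$ is defined to solve the dual ODE driven by $\hat{\rho}^N$, so $(\hat{\gamma}^N,\hat{\rho}^N)\in\Sigma_1$, and since $\bold{D}$ is a maximization, $J_{\bold{D}}(\hat{\rho}^N,\hat{\gamma}^N,\epsilon)\leq V_{\bold{D}}(\epsilon)$, the required positive sign in \eqref{o epsilon dual}. For the order, let $(\rho,\gamma)$ be the optimal dual pair (which exists by Remark~\ref{unique solution dual}), with the optimality relations $\rho=Qz$ and $\gamma(1,\epsilon)=-I^{\frac{1}{\epsilon}}\pi(\epsilon)z(1,\epsilon)$ inherited from the strong-duality analysis behind Theorem~\ref{strong duality}. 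Then $\hat{\rho}^N-\rho=Q(\hat{z}^N-z)=O(\epsilon^{N+1})$. Crucially, the terminal data match to the same order because the block structure~(e) of $\pi$ cancels the $1/\epsilon$ in $I^{\frac{1}{\epsilon}}$: the product $I^{\frac{1}{\epsilon}}\pi$ is uniformly bounded in $\epsilon$, so by \eqref{gamma1},
\begin{equation*}
\hat{\gamma}^N(1,\epsilon)-\gamma(1,\epsilon)=-I^{\frac{1}{\epsilon}}\pi(\epsilon)\bigl(\hat{z}^N(1,\epsilon)-z(1,\epsilon)\bigr)=O(\epsilon^{N+1}).
\end{equation*}
A backward-in-time singular-perturbation estimate for the dual ODE, again using the sign-definiteness of $A_{22}$, then propagates $\hat{\gamma}^N-\gamma=O(\epsilon^{N+1})$ uniformly on $[0,1]$. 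Finally, the same variational identity as in part (a), now for the concave quadratic $J_{\bold{D}}$ with negative semidefinite Hessian $\mathcal{Q}_{\bold{D}}$ maximized over the affine set $\Sigma_1$, yields
\begin{equation*}
V_{\bold{D}}(\epsilon)-J_{\bold{D}}(\hat{\rho}^N,\hat{\gamma}^N,\epsilon)=-\tfrac12\,\mathcal{Q}_{\bold{D}}[\hat{\rho}^N-\rho,\hat{\gamma}^N-\gamma]\geq 0,
\end{equation*}
which is $O(\epsilon^{N+1})$ by the two estimates above.

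Part (c) is then immediate: parts (a) and (b) furnish the two outer inequalities, Theorem~\ref{strong duality} supplies the central equality $V_{\bold{D}}(\epsilon)=V_{\bold{P}}(\epsilon)$, and the gap follows from the triangle inequality, $|J_{\bold{P}}(u^N,\hat{z}^N,\epsilon)-J_{\bold{D}}(\hat{\rho}^N,\hat{\gamma}^N,\epsilon)|\leq|J_{\bold{P}}-V_{\bold{P}}|+|V_{\bold{D}}-J_{\bold{D}}|=O(\epsilon^{N+1})$. The main obstacle I anticipate is not the variational bookkeeping but the uniform-in-$t$ order estimates for the singularly perturbed state and costate: the factor $\epsilon$ multiplying $\dot{z}_2$ and $\dot{\gamma}_2$ generates boundary layers, and one must verify that the $O(\epsilon^{N+1})$ perturbations in control and terminal data do not amplify through the fast subsystem. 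This is precisely where assumptions~(a), (e) and the spectral conditions~(g)--(h) are essential; some care is also needed to confirm that the Hessian forms $\mathcal{Q}_{\bold{P}}$ and $\mathcal{Q}_{\bold{D}}$---in particular the boundary terms involving $\pi$ and $I^\epsilon\pi^{-1}I^\epsilon$---remain bounded uniformly in $\epsilon$, so that squared-norm estimates on the perturbations translate directly into $O(\epsilon^{N+1})$ bounds on the objective gaps.
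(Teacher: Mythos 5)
Your proposal is correct, and it reaches the two one-sided estimates by a genuinely different mechanism than the paper. The paper works at first order: it writes $z$ and $\hat{z}^N$ (and, after the time reversal $\omega=1-t$, $\gamma$ and $\hat{\gamma}^N$) via variation of parameters, invokes the resolvent bounds \eqref{phis} (from \cite{Flatto} and \cite{Smith}, Theorem 6.1.2) to propagate $u-u^N=O(\epsilon^{N+1})$ and the matched terminal data into uniform state and costate estimates, and then substitutes these directly into the difference of the two objective values, where the cross terms give $O(\epsilon^{N+1})$; the signs come solely from optimality of $u$ and $\rho$, just as in your argument. You instead exploit that $\Sigma$ and $\Sigma_1$ are affine and the objectives quadratic, so the Taylor expansion at the optimum terminates and the first variation vanishes along the feasible directions $(\hat{z}^N-z,\,u^N-u)$ and $(\hat{\gamma}^N-\gamma,\,\hat{\rho}^N-\rho)$, reducing each gap to a semidefinite Hessian form in the perturbation. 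This buys two things the paper does not state: the sign of each remainder follows structurally from semidefiniteness as well as from feasibility-plus-optimality, and, since the Hessian form is quadratic in perturbations of size $O(\epsilon^{N+1})$, your identity actually yields the sharper one-sided gaps $O(\epsilon^{2(N+1)})$, of which the claimed $O(\epsilon^{N+1})$ is a weakening. What the two proofs share is the real analytic content: the uniform-in-$t$ propagation of $O(\epsilon^{N+1})$ perturbations through the fast subsystem, which you rightly identify as the main obstacle but leave as a sketch; the paper discharges exactly this step by the bounds \eqref{phis}, applied once forward for the primal state and once, under $\omega=1-t$, for the dual state, so your ``backward-in-time singular-perturbation estimate'' should be made precise by that same citation. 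Your explicit observations that assumption (e) renders $I^{\frac{1}{\epsilon}}\pi(\epsilon)$ and $I^{\epsilon}\pi(\epsilon)^{-1}I^{\epsilon}$ uniformly bounded in $\epsilon$ are correct and are used only implicitly in the paper (the terminal matching \eqref{tildegamma} requires precisely the first of these), and your part (c), via strong duality and the triangle inequality, coincides with the paper's.
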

 %
 
 Parts (a) and (b) of Theorem \ref{main theorem} are proved in Section \ref{main proofs}.  Part (c) follows immediately from Theorem \ref{strong duality} and parts (a) and (b) in Theorem \ref{main theorem}, hence we will not explicitly go over the proof in this paper. 

\section{Construction of asymptotic expansion of the optimal control to $\bold{P}$} \label{construction}
In this section, we give a brief outline of the construction of an asymptotic expansion to the optimal control of $\bold{P}$ and $\bold{D}$.  
The full details may be found in \cite{OMalley}-\cite{OMalleyKung}.
We begin by deriving the necessary optimality conditions for $\bold{P}$ from the corresponding Hamiltonian function. Omitting dependence on $t$ and $\epsilon$ for simplicity, the  Hamiltonian function associated with a singularly perturbed problem of the form $\bold{P}$ (see \cite{KokotovicKhalilOreilly}, Chap. 6) is defined as 
\begin{equation*} \label{hamiltonian primal}
H^\bold{P}(\hat{z},\hat{u},\hat{\chi})=\frac{1}{2}(\hat{z}^T Q \hat{z}+ \hat{u}^TR\hat{u})+\hat{\chi}^T(A \hat{z}+ b\hat{u}) , 
\end{equation*}
where $\chi^T = [\chi_1^T,\chi_2^T]$ and $\chi_1 \in W^{1,2} ([0,1];\mathds{R}^m)$ and $\chi_2 \in W^{1,2} ([0,1];\mathds{R}^n)$ as functions of $t$ are the co-state variables associated with $z_1$ and $z_2$ respectively.  The scaling $\hat{\chi} \rightarrow I^{\frac{1}{\epsilon}}\hat{ \chi} $ recovers the standard form of the Hamiltonian.   Since there are no boundary conditions at the terminal time we have a normal Hamiltonian multiplier.
Let $u$, $z$, $\chi$ denote the optimal control, state and co-state respectively of $\bold{P}$.  These variables must satisfy the following necessary optimality conditions (see \cite{Clarke})
\begin{align}\begin{split}\label{p opt cond}
\frac{dz_1}{dt} & = A_{11}(t)z_1 + A_{12}(t) z_2 -S_{11} (t)\chi_{1} -S_{12}(t)\chi_{2} , \\
\frac{d\chi_1}{dt} & = -A_{11}^T (t)\chi_1 - A_{21}^T(t)\chi_2 - Q_{11}(t)z_1 -Q_{12} (t)z_2, \\
\epsilon \frac{dz_2}{dt} & = A_{21}(t) z_1 + A_{22}(t)z_2 -S_{12}^T(t) \chi_{1} -S_{22}(t) \chi_{2},  \\
\epsilon \frac{d\chi_2}{dt} & = -A_{12}^T (t)\chi_1 - A_{22}^T(t)\chi_2 - Q_{21}(t)z_1 -Q_{22}(t)z_2,\end{split}
\end{align}
where $S_{11}$, $S_{12}$ and $S_{22}$ are defined as
\begin{align*}
S_{11} & = b_1R^{-1}b_1^T ,\\
S_{12}&=b_1R^{-1}b_2^T, \\
S_{22}&= b_2R^{-1}b_2^T . 
\end{align*}
 The  boundary conditions that these variables must satisfy are given by
\begin{align}\begin{split} \label{p bound cond1}
z_1(0,\epsilon) = z_{1,0}, & \hspace{4mm} \chi_1(1,\epsilon) = \pi_{11}(\epsilon)z_1(1) + \epsilon \pi_{12}(\epsilon)z_{2}(1),\\
z_2(0,\epsilon)= z_{2,0}, & \hspace{4mm} \chi_{2}(1,\epsilon)= \pi_{21}(\epsilon)z_1(1) + \pi_{22}(\epsilon)z_{2}(1).
\end{split}
\end{align}
From the Pontryagin Minimum Principle \cite{Clarke}, it follows that the optimal control of $\bold{P}$ satisfies 
\begin{equation}\label{controlu}
u(t,\epsilon) = -R^{-1}(b_1^T\chi_1+b_2^T\chi_2),
\end{equation}
where $\chi_1$ and $\chi_2$ must satisfy the equations in \eqref{p opt cond}.    By Remark \ref{unique solution primal}, it follows that the necessary conditions are also sufficient; therefore any solution satisfying these conditions will be the unique solution to $\bold{P}$.  
 It follows from \eqref{controlu} and assumption (c) that in order to obtain an asymptotic expansion for $u$, we must obtain an asymptotic expansion for the co-state variables.   
 
 In the following theorem, we use the method of matched asymptotic expansions in order to derive an expansion for the optimal states and co-states on the outer layer as well as a boundary layer near the initial time and a boundary layer near the final time.
\begin{theorem}
Let us define the following time scales
\begin{equation*}
\tau = \frac{t}{\epsilon}, \hspace{5mm} \sigma=\frac{1-t}{\epsilon}.
\end{equation*}
The optimal states $z_1$ and $z_2$ and co-states $\chi_1$ and $\chi_2$ of the problem $\bold{P}$  have a unique asymptotic solution of the form
\begin{align} \begin{split} \label{expansionfull}
z_1(t,\epsilon)& = z_{1,o}(t,\epsilon) + \epsilon z_{1,i}(\tau,\epsilon) + \epsilon z_{1,f}(\sigma,\epsilon),\\
z_2(t,\epsilon)& = z_{2,o}(t,\epsilon) + z_{2,i}(\tau,\epsilon) +  z_{2,f}(\sigma,\epsilon),\\
\chi_1(t,\epsilon)& = \chi_{1,o}(t,\epsilon) + \epsilon \chi_{1,i}(\tau,\epsilon) + \epsilon \chi_{1,f}(\sigma,\epsilon),\\
\chi_2(t,\epsilon)& = \chi_{2,o}(t,\epsilon) + \chi_{2,i}(\tau,\epsilon) + \chi_{2,f}(\sigma,\epsilon).
\end{split}
\end{align}
The terms $z_{1,o}$, $z_{2,o}$, $\chi_{1,o}$ and $\chi_{2,o}$, known as the outer variables, satisfy the system \eqref{p opt cond} and have an asymptotic expansion in $\epsilon$. 
The terms $ z_{1,i}$, $z_{2,i}$ $ \chi_{1,i}$ and $ \chi_{2,i}$, known as the inner variables,  satisfy the system
\begin{align}
\begin{split} \label{ziqi}
\frac{dz_{1,i}}{d\tau} & = \epsilon A_{11}(\epsilon \tau ) z_{1,i} + A_{12}(\epsilon \tau) z_{2,i}- \epsilon S_{11}(\epsilon\tau)\chi_{1,i} \\&- S_{12}(\epsilon\tau) \chi_{2,i}\\
\frac{d\chi_{1,i}}{d\tau} & = -  \epsilon A_{11}^T(\epsilon \tau) \chi_{1,i} -  A_{21}^T(\epsilon \tau) \chi_{2,i}  - \epsilon Q_{11}(\epsilon \tau) z_{1,i} \\& -Q_{12}(\epsilon \tau) z_{2,i}   \\
\frac{dz_{2,i}}{d\tau} & = \epsilon A_{21}(\epsilon \tau)  z_{1,i} + A_{22}(\epsilon \tau) z_{2,i} - \epsilon S_{12}^T(\epsilon\tau)\chi_{1,i}\\& - S_{22}(\epsilon\tau) \chi_{2,i}\\
\frac{d\chi_{2,i}}{d\tau} & =  -  \epsilon A_{12}^T(\epsilon \tau) \chi_{1,i} -  A_{22}^T(\epsilon \tau) \chi_{2,i}  - \epsilon Q_{21}(\epsilon \tau ) z_{1,i}  \\&-Q_{22}(\epsilon \tau) z_{2,i}   ,
\end{split}
\end{align}
and have an asymptotic expansion in $\epsilon$. 
The terms  $ z_{1,f}$, $z_{2,f}$ $ \chi_{1,f}$ and $\chi_{2,f}$, known as the final variables,  satisfy the system
\begin{align} \label{zfqf}
\begin{split}
\frac{dz_{1,f}}{d\sigma} =&  -\epsilon A_{11}(1- \epsilon \sigma) z_{1,f} - A_{12}(1 - \epsilon\sigma)  z_{2,f} \\&+ \epsilon S_{11}(1 - \epsilon\sigma)\chi_{1,f}  + S_{12}(1 - \epsilon\sigma) \chi_{2,f}\\
\frac{d\chi_{1,f}}{d\sigma}  =&  \epsilon A_{11}^T(1-\epsilon \sigma) \chi_{1,f}  + A_{21}^T(1- \epsilon \sigma) \chi_{2,f}    \\& +\epsilon Q_{11}(1-\epsilon\sigma)  z_{1,f}  + Q_{12}(1-\epsilon\sigma) z_{2,f}   \\
\frac{dz_{2,f}}{d\sigma} =& - \epsilon A_{21}(1-\epsilon \sigma)  z_{1,f}- A_{22}(1- \epsilon\sigma) z_{2,f} \\&+ \epsilon S_{12}^T(1 - \epsilon\sigma)\chi_{1,f} + S_{22}(1-\epsilon\sigma)  \chi_{2,f}
\end{split}
\end{align}
\begin{align*}
\begin{split}
\frac{d\chi_{2,f}}{d\sigma}  =& \epsilon A_{12}^T(1- \epsilon\sigma ) \chi_{1,f} + A_{22}^T(1-\epsilon\sigma) \chi_{2,f}\\&  + \epsilon Q_{21}(1-\epsilon\sigma) z_{1,f}  + Q_{22}(1-\epsilon \sigma) z_{2,f}   ,
\end{split}
\end{align*}
and have an asymptotic expansion in $\epsilon$.  Furthermore, the following boundary conditions must be satisfied
\begin{align} \begin{split} \label{bdrycond}
z_{1,o}(0)+\epsilon z_{1,i}(0) =& z_{1,0},\\
z_{2,o}(0)+z_{2,i}(0) = &z_{2,0},\\
 \chi_{1,o}(1) + \epsilon \chi_{1,f}(0) = &\pi_{11}(z_{1,o}(1)+\epsilon z_{1,f}(0) )  \\&+\epsilon  \pi_{12}( z_{2,o}(1)+ z_{2,f}(0) 
\end{split}
\end{align}
\begin{align*}
\begin{split}
 \chi_{2,o}(1) +\chi_{2,f}(0) =& \pi_{21}(z_{1,o}(1)+\epsilon z_{1,f}(0) ) \\&+\pi_{22} (z_{2,o}(1)+ z_{2,f}(0) ) \end{split}
\end{align*}
along with the following limiting conditions
\begin{align*}
\lim_{\tau \rightarrow \infty} z^k_{1,i},z^k_{2,i}, \chi^k_{1,i},\chi^k_{2,i} &= 0,\\
\lim_{\sigma \rightarrow \infty} z^k_{1,f},z^k_{2,f}, \chi^k_{1,f},\chi^k_{2,f}  &= 0.
\end{align*}
for all $k=0,1,\dots$.  
\end{theorem}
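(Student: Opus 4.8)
The plan is to treat the optimality system~\eqref{p opt cond} together with the split boundary conditions~\eqref{p bound cond1} as a linear, singularly perturbed two-point boundary value problem in the composite unknown $(z_1,z_2,\chi_1,\chi_2)$, whose slow components $z_1,\chi_1$ obey equations with no $\epsilon$ in front of the derivative while the fast components $z_2,\chi_2$ carry the factor $\epsilon$. Because the reduced ($\epsilon=0$) problem discards the $z_2$ and $\chi_2$ boundary data, and the terminal conditions for $z_1,\chi_1$ cannot in general be met by a purely interior solution, I would construct the solution by the method of matched asymptotic expansions: an outer expansion valid on the interior of $[0,1]$, an initial boundary layer resolved on the fast scale $\tau=t/\epsilon$, and a final boundary layer resolved on the fast scale $\sigma=(1-t)/\epsilon$. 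The scalings displayed in~\eqref{expansionfull}, namely an $O(\epsilon)$ layer correction for the slow variables and an $O(1)$ correction for the fast ones, are precisely those that make the stretched layer equations balance at leading order.

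First I would substitute the ansatz~\eqref{expansionfull} into~\eqref{p opt cond}. Writing $t=\epsilon\tau$ in the initial layer and $t=1-\epsilon\sigma$ in the final layer, and expanding each coefficient matrix in its own asymptotic series using assumption~(c), reproduces exactly the stretched systems~\eqref{ziqi} and~\eqref{zfqf}, while the interior terms recover~\eqref{p opt cond}; the separation of slow and fast scales guarantees that the outer, inner, and final contributions do not interfere to any algebraic order. I would then expand each of the three groups of variables in powers of $\epsilon$ and equate like powers. At each order $k$ this yields a linear differential-algebraic system for the outer coefficients, in which the fast equations degenerate at $\epsilon=0$ to an algebraic relation with coefficient matrix $G(t)$ --- solvable for the fast outer variables because assumption~(g) forbids a zero eigenvalue of $G$ --- together with constant-coefficient linear ODEs for the layer coefficients whose forcing is assembled from strictly lower orders.

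The heart of the construction is the decay-and-matching argument that furnishes existence and uniqueness at every order. The leading-order layer operators are the frozen Hamiltonian matrix $G(t)$ evaluated at $t=0$ for the initial layer and at $t=1$ (with a sign reversal from $d/d\sigma=-\epsilon^{-1}d/dt$) for the final layer; assumption~(g) ensures $G$ has no eigenvalue on the imaginary axis, and assumption~(h) provides the transformation $T(t)$ in~\eqref{tmatrix} block-diagonalising $G$ into the stable part $-\Lambda(t)$ and the unstable part $\Lambda(t)$ as in~\eqref{tlambda}. The limiting condition $\lim_{\tau\to\infty}(\cdot)=0$ forces the initial layer onto the stable subspace of $G(0)$, and $\lim_{\sigma\to\infty}(\cdot)=0$ forces the final layer onto the unstable subspace of $G(1)$, so both corrections decay exponentially. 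The surviving amplitudes are then fixed by the matching relations~\eqref{bdrycond}, and the nonsingularity of $T_{11}(0)$ and of $T_{22}(1)-\pi_{22}^0 T_{12}(1)$ in assumption~(h) is exactly the rank condition making these amplitudes uniquely solvable in terms of the outer data; an induction on $k$ then delivers the full formal expansion.

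The main obstacle is to upgrade this formal series into a genuine uniform asymptotic expansion, that is, to show that truncating~\eqref{expansionfull} after the $N$-th order leaves a remainder that is $O(\epsilon^{N+1})$ uniformly on $[0,1]$. For this I would verify that the truncated ansatz satisfies~\eqref{p opt cond}--\eqref{p bound cond1} with an $O(\epsilon^{N+1})$ residual, and then invoke a stability estimate for the singularly perturbed linear boundary value operator that is uniform in $\epsilon$, which converts the small residual directly into a small solution error. The uniform invertibility of that operator rests once more on the exponential dichotomy supplied by assumptions~(g) and~(h), and this dichotomy-based stability bound --- rather than the formal matching itself --- is the delicate ingredient, following the analysis of O'Malley and Kung in \cite{OMalley}--\cite{OMalleyKung}.
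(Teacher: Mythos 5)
Your proposal takes essentially the same route as the paper, which itself only outlines the construction and defers the rigorous proof to \cite{OMalley}--\cite{OMalleyKung}: matched outer/initial/final expansions obtained by equating powers of $\epsilon$, the algebraic reduction of the fast outer equations via the nonsingular Hamiltonian matrix $G$ from assumption (g), exponentially decaying layer solutions selected through the block-diagonalising transformation $T$ of assumption (h), and layer amplitudes fixed from the boundary conditions \eqref{bdrycond} using the nonsingularity of $T_{11}(0)$ and $T_{22}(1)-\pi_{22}^0 T_{12}(1)$. Your final step, upgrading the formal series to a uniform $O(\epsilon^{N+1})$ expansion via a residual estimate and a dichotomy-based uniform stability bound, is exactly the part the paper delegates to O'Malley and Kung, so the two arguments coincide.
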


The proof and construction are contained in \cite{OMalley}-\cite{OMalleyKung}; however, we give an outline of the construction in this paper for completeness.  

By matching the various orders of $\epsilon$ in the differential equations and boundary conditions that the outer, inner and final variables satisfy, one may determine the terms in the asymptotic expansion of the variables in  \eqref{expansionfull} up to any integer $N\geq 0$. 
It follows that the leading terms in the asymptotic expansion of the outer variables, $z_{1,o}^0$, $z_{2,o}^0$, $\chi_{1,o}^0$, and $\chi_{2,o}^0$ satisfy the system
\begin{align}\begin{split}\label{p opt condzero}
\frac{dz_{1,o}^0}{dt} & = A_{11}^0z_{1,o}^0 + A_{12}^0 z_{2,o}^0 -S_{11}^0\chi_{1,o}^0 -S_{12}^0\chi_{2,o}^0 , \\
\frac{d\chi_{1,o}^0}{dt} & = -(A_{11}^0 )^T\chi_{1,o}^0 - (A_{21}^0)^T\chi_{2,o}^0 - Q_{11}^0z_{1,o}^0 -Q_{12}^0 z_{2,o}^0, \\
0& = A_{21}^0 z_{1,o}^0 + A_{22}^0z_{2,o}^0 -(S_{12}^0 )^T\chi_{1,o}^0 -S_{22}^0\chi_{2,o}^0,  \\
0& = -(A_{12}^0)^T\chi_{1,o}^0 -(A_{22}^0)^T\chi_{2,o} - Q_{21}^0z_{1,o}^0 -Q_{22}^0z_{2,o}^0,\end{split}
\end{align}
with boundary conditions
\begin{align}\label{poptcond}
z_{1,o}^0 = z^0_{1,0}, \hspace{4mm} \chi_{1,o}^0 = \pi^0 z_{1,o}^0.
\end{align}
Higher order terms of the outer variables will satisfy non-homogeneous differential equations that are successively determined from the lower order terms.  Higher order boundary values are determined  from the lower order terms and the boundary  conditions in $\bold{P}$.  

The leading terms in the asymptotic expansion of the inner variables, $z_{1,i}^0$, $z_{2,i}^0$, $\chi_{1,i}^0$, and $\chi_{2,i}^0$ satisfy the system
\begin{align}
\begin{split} \label{ziqii}
\frac{dz_{1,i}^0}{d\tau} & = A_{12}^0(0) z_{2,i}^0- S_{12}^0(0) \chi_{2,i}^0\\
\frac{d\chi_{1,i}^0}{d\tau} & =  -  A_{21}^0(0)^T \chi_{2,i}^0 -Q_{12}(0)^0 z_{2,i}^0   \\
\frac{dz_{2,i}^0}{d\tau} & = A_{22}^0(0) z_{2,i}^0 - S_{22}^0(0) \chi_{2,i}^0\\
\frac{d\chi_{2,i}^0}{d\tau} & = -  A_{22}^0(0)^T \chi_{2,i}^0 -Q_{22}^0(0)^T z_{2,i}^0 .
\end{split}
\end{align}
 From assumptions (e), (g), and (h), we may obtain the general form of the decaying solution to $z_{2,i}^0$ and $\chi_{2,i}^0$
 \begin{align}\label{zchifast} \begin{split}
 z_{2,i}^0 (\tau) &= T_{11}(0)e^{-\Lambda(0)\tau}c,\\
 \chi_{2,i}^0(\tau) &=T_{21}(0)e^{-\Lambda(0)\tau}c, \end{split}
 \end{align}
 where $c$ is determined from the boundary condition for $z_{2,i}$ in \eqref{bdrycond} and is given by
 \begin{align}\label{cconstant}
 c=T_{11}^{-1}(0)(z_{2,0}^0 - z_{2,o}^0(0)).
 \end{align}
 Substituting  \eqref{zchifast} and \eqref{cconstant}  into the equations for $z_{1,i}^0$ and $\chi_{1,i}^0$ in \eqref{ziqii} and solving the resulting system yields the unique  solutions of $z_{1,i}^0$ and $\chi_{1,i}^0$.  Higher order terms for the inner variables can be obtained by matching powers of $\epsilon$ in \eqref{ziqi}.  The initial condition for $z_{2,i}^k(0)$ is determined from the outer term $z_{2,o}^{k-1}(0)$ for all $k=1,2,\dots$.  
 
 The leading terms in the asymptotic expansion of the final variables, $z_{1,f}^0$, $z_{2,f}^0$, $\chi_{1,f}^0$, and $\chi_{2,f}^0$ satisfy the system
\begin{align}
\begin{split} \label{ziqii2}
\frac{dz_{1,f}^0}{d\sigma} & = -A_{12}^0(1) z_{2,f}^0+ S_{12}^0(1) \chi_{2,f}^0\\
\frac{d\chi_{1,f}^0}{d\sigma} & =    A_{21}^0(1)^T \chi_{2,f}^0 +Q_{12}(1)^0 z_{2,f}^0   \\
\frac{dz_{2,f}^0}{d\sigma} & = -A_{22}^0(1) z_{2,f}^0 + S_{22}^0(1) \chi_{2,f}^0\\
\frac{d\chi_{2,f}^0}{d\sigma} & =  A_{22}^0(1)^T \chi_{2,f}^0 +Q_{22}^0(1)^T z_{2,f}^0 .
\end{split}
\end{align}
 From assumptions  (e), (g), and (h),  we may obtain the general form of the decaying solution to $z_{2,f}^0$ and $\chi_{2,f}^0$
 \begin{align}\label{zchifast2} \begin{split}
 z_{2,f}^0 (\sigma) &= T_{12}(1)e^{-\Lambda(1)\sigma}c_1,\\
 \chi_{2,f}^0(\sigma) &=T_{22}(1)e^{-\Lambda(1)\sigma}c_1, \end{split}
 \end{align}
 where $c_1$ is determined from the boundary condition for $\chi_{2,f}$  in \eqref{bdrycond} and is given by
 \begin{align}\label{cconstant2}
 c_1=(T_{11}(1)-\pi_{22}^0 T_{12}(1))^{-1}(\pi_{21}^0z_{1,o}^0+\pi_{22}^0z_{2,o}^0-\chi_{2,o}^0(1)).
 \end{align}
 Substituting  \eqref{zchifast2} and \eqref{cconstant2} into the equations for $z_{1,f}^0$ and $\chi_{1,f}^0$ in \eqref{ziqii2} and solving the resulting system yields the unique  solutions of  $z_{1,f}^0$ and $\chi_{1,f}^0$.  Higher order terms for the final variables can be obtained by matching powers of $\epsilon$ in \eqref{zfqf} and in the terminal condition for $\chi_{2,f}$ in \eqref{p bound cond1}.  

Note that the leading terms in the asymptotic expansion of the optimal control and states are dependent on the terms $z_{1,o}^0$ and $\chi_{1,o}^0$ which  satisfy a boundary value problem.  In order to obtain these terms, we use the  following theorem. 
\begin{theorem} \label{theoremreduced} The terms $z_{1,o}^0$ and $\chi_{1,o}^0$ satisfy the necessary optimality conditions of the following non-perturbed problem
\begin{align} \tag{\textbf{$\bar{\textbf{P}}$}}
\begin{cases}
 \underset{\hat{u}} \minimise &\bar{J}_\bold{P}(\hat{x}, \hat{u}),\\
 \mbox{subject to}& \frac{d\hat{x}}{dt}=\mathcal{A}\hat{x}+\mathcal{B}\hat{u},\\
 &\hat{x}(0)=z_{1,0}^0, 
\end{cases}
\end{align}
where the functional $ \bar{J}_\bold{P} (\hat{x}, \hat{u})$ is defined by 
\begin{equation*} \label{def jp reduced}
 \bar{J}_\bold{P} (\hat{x}, \hat{u})=\frac{1}{2} \int_0^1 \hat{x}^T\mathcal{Q}\hat{x}+\hat{u}^T\mathcal{R}\hat{u}\hspace{1mm} \mathrm{d}t  +\frac{1}{2} \hat{x}(1)^T\pi^0_{11}\hat{x}(1).
 \end{equation*}
Note that $\hat{x} \in W^{1,2}([0,1];\mathds{R}^m)$ as a function of $t$. We assume the matrix $\mathcal{Q}$ to be positive semi-definite and $\mathcal{R}$ to be positive definite where the matrices $\mathcal{Q}$,  $\mathcal{R}$,  $\mathcal{B}$ and $\mathcal{A}$ are defined as
\begin{align*} \begin{split} \label{qq}
\mathcal{R} =& R^0+(b_2^0)^T((A_{22}^0)^T)^{-1}Q_{22}^0(A_{22}^0)^{-1}b_2^0\\
\mathcal{Q}=&-(A^0_{21})^T((A^0_{22})^{-1})^TQ^0_{21} +Q^0_{11}-Q^0_{12}(A_{22}^0)^{-1}A_{21}^0\\
&+(A_{21}^0)^T((A_{22}^0)^{-1})^TQ_{22}^0(A_{22}^0)^{-1}A_{21}^0 -\mathcal{C} \mathcal{R}^{-1} \mathcal{C}^T \\
\mathcal{B}=&b_1^0-A^0_{12}(A^0_{22})^{-1}b_2^0\\
\mathcal{A}=& A^0_{11}-A^0_{12}(A_{22}^0)^{-1}A^0_{21}+\mathcal{B}\mathcal{R}^{-1}\mathcal{C}^T.\end{split}
\end{align*}
where 
\begin{equation}
\mathcal{C} = (Q_{12}^0-(A_{21}^0)^T((A_{22}^0)^T)^{-1}Q_{22}^0)(A_{22}^0)^{-1}b_2^0.
\end{equation}

\end{theorem}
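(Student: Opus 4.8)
The plan is to verify \emph{directly} that the pair $(z_{1,o}^0,\chi_{1,o}^0)$ solves the two-point boundary-value problem formed by the necessary optimality conditions of $\bar{\textbf{P}}$. Since $\bar{\textbf{P}}$ is an unconstrained linear-quadratic problem, its optimality conditions, written with state $\hat{x}$ and co-state $\bar{p}$, read $\dot{\hat{x}}=\mathcal{A}\hat{x}-\mathcal{B}\mathcal{R}^{-1}\mathcal{B}^T\bar{p}$ and $\dot{\bar{p}}=-\mathcal{Q}\hat{x}-\mathcal{A}^T\bar{p}$, together with the stationarity relation $\mathcal{R}\hat{u}+\mathcal{B}^T\bar{p}=0$ and the boundary data $\hat{x}(0)=z_{1,0}^0$, $\bar{p}(1)=\pi_{11}^0\hat{x}(1)$. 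I would therefore show that substituting $\hat{x}=z_{1,o}^0$ and $\bar{p}=\chi_{1,o}^0$ turns each relation into an identity; the two boundary conditions are immediate from \eqref{poptcond} (with the identification $\pi^0=\pi_{11}^0$), so all the content lies in the two differential relations.

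The engine of the proof is the elimination of the fast variables $z_{2,o}^0$ and $\chi_{2,o}^0$ from the leading-order outer system \eqref{p opt condzero}, which consists of two genuine differential equations and two algebraic equations. First I would reinstate the control through \eqref{controlu}, observing that $-S_{11}^0\chi_{1,o}^0-S_{12}^0\chi_{2,o}^0=b_1^0 u_o^0$ and $-(S_{12}^0)^T\chi_{1,o}^0-S_{22}^0\chi_{2,o}^0=b_2^0 u_o^0$, where $u_o^0=-(R^0)^{-1}((b_1^0)^T\chi_{1,o}^0+(b_2^0)^T\chi_{2,o}^0)$. With this substitution the third equation of \eqref{p opt condzero} becomes the reduced state constraint $0=A_{21}^0 z_{1,o}^0+A_{22}^0 z_{2,o}^0+b_2^0 u_o^0$, and since $A_{22}^0$ is invertible by assumption (a) it yields the quasi-steady state $z_{2,o}^0=-(A_{22}^0)^{-1}(A_{21}^0 z_{1,o}^0+b_2^0 u_o^0)$. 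The fourth equation then expresses $\chi_{2,o}^0$ through $z_{1,o}^0$, $\chi_{1,o}^0$ and $u_o^0$; inserting both into \eqref{controlu} and solving the resulting linear equation for $u_o^0$ produces the coefficient matrix $\mathcal{R}$, which is positive definite, hence invertible, by assumptions (d) and (f). One then reads off $u_o^0=-\mathcal{R}^{-1}\mathcal{B}^T\chi_{1,o}^0+\mathcal{R}^{-1}\mathcal{C}^T z_{1,o}^0$, which simultaneously identifies $\mathcal{B}$ and $\mathcal{C}$.

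With the fast variables eliminated, substituting $z_{2,o}^0$ and $u_o^0$ into the first equation of \eqref{p opt condzero} and collecting terms gives $\dot{z}_{1,o}^0=(A_{11}^0-A_{12}^0(A_{22}^0)^{-1}A_{21}^0+\mathcal{B}\mathcal{R}^{-1}\mathcal{C}^T)z_{1,o}^0-\mathcal{B}\mathcal{R}^{-1}\mathcal{B}^T\chi_{1,o}^0$, whose $z_{1,o}^0$-coefficient is precisely $\mathcal{A}$; this is exactly the state half of the $\bar{\textbf{P}}$ system. For the co-state half I would substitute the expressions for $z_{2,o}^0$ and $\chi_{2,o}^0$ into the second equation $\dot{\chi}_{1,o}^0=-(A_{11}^0)^T\chi_{1,o}^0-(A_{21}^0)^T\chi_{2,o}^0-Q_{11}^0 z_{1,o}^0-Q_{12}^0 z_{2,o}^0$ and collect the coefficients of $\chi_{1,o}^0$ and $z_{1,o}^0$, with the aim of showing they equal $-\mathcal{A}^T$ and $-\mathcal{Q}$ respectively.

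The genuine obstacle is this co-state equation. Rather than expand it blindly, I would exploit that \eqref{p opt condzero} is the Hamiltonian system of a linear-quadratic problem: ordering the variables as the two state--co-state pairs $(z_{1,o}^0,\chi_{1,o}^0)$ and $(z_{2,o}^0,\chi_{2,o}^0)$, its generator is infinitesimally symplectic, and the block coupling the fast pair is exactly the matrix $G$ of assumption (g), which is invertible because its eigenvalues avoid the imaginary axis. Eliminating the fast pair is then a Schur-complement reduction onto a symplectic subspace, and such a reduction preserves the Hamiltonian block structure. Consequently, once the state equation is known to have $(1,1)$-block $\mathcal{A}$ and symmetric $(1,2)$-block $-\mathcal{B}\mathcal{R}^{-1}\mathcal{B}^T$, the reduced co-state equation is automatically of the form $\dot{\chi}_{1,o}^0=-\tilde{\mathcal{Q}}z_{1,o}^0-\mathcal{A}^T\chi_{1,o}^0$ with $\tilde{\mathcal{Q}}$ symmetric, so only the identity $\tilde{\mathcal{Q}}=\mathcal{Q}$ remains; this I would confirm by grouping the substituted terms, using $Q_{21}^0=(Q_{12}^0)^T$ from assumption (f) and the definition of the cross term $\mathcal{C}\mathcal{R}^{-1}\mathcal{C}^T$. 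Together with the boundary conditions already matched, this shows that $(z_{1,o}^0,\chi_{1,o}^0)$ satisfies the optimality system of $\bar{\textbf{P}}$, the associated control being the transformed variable $\hat{u}=u_o^0-\mathcal{R}^{-1}\mathcal{C}^T z_{1,o}^0$ that absorbs the cross term.
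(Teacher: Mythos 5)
Your proposal is correct and follows essentially the same route as the paper, which states that the result ``follows from a straightforward application of the necessary conditions derived from the Hamiltonian function'' and defers the details to the cited reference \cite{OMalleyKung}: you carry out exactly that standard elimination of $z_{2,o}^0$ and $\chi_{2,o}^0$ from the algebraic equations of \eqref{p opt condzero}, solve for $u_o^0$ to produce $\mathcal{R}$, $\mathcal{B}$, $\mathcal{C}$, and match the resulting reduced Hamiltonian system and boundary data (with $\pi^0=\pi_{11}^0$) to the optimality conditions of $\bar{\textbf{P}}$, and your algebra checks out. The symplectic Schur-complement observation for the co-state half is a pleasant structural shortcut but does not change the substance of the argument.
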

The proof of Theorem \ref{theoremreduced}  follows from a straightforward application of the necessary conditions derived from the Hamiltonian function (see \cite{OMalleyKung} for details).    The assumption that $Q$ is positive semi-definite and $R$ is positive definite imply that the necessary optimality conditions  of $\bar{\textbf{P}}$ are sufficient; hence the variables $z_{1,o}^0$ and $\chi_{1,o}^0$ can be obtained from the optimal state and co-state of the problem $\bar{\textbf{P}}$ respectively.    

The asymptotic expansion of the optimal control $u$ for the primal problem $\bold{P}$ may now be obtained from the asymptotic expansion constructed for $\chi_{1}$ and $\chi_2$ and equation \eqref{controlu}.   The asymptotic expansion of the optimal control $\rho$ for the dual problem $\bold{D}$ is then given by  \eqref{bold lambda control} where $\hat{z}$ can be taken to be either the solution to the differential equations of $\bold{P}$ evaluated with  the asymptotic expansion of the optimal control to the primal problem or the approximation to $z$ given in \eqref{expansionfull}. 

\section{Proofs of main theorems}\label{main proofs}
In this section we provide the proofs of Theorems \ref{strong duality} and \ref{main theorem}.  The proof of Theorem \ref{main theorem} is split into two sections, the first of which covers the proof of an arbitrarily tight upper bound and the second of which covers the proof of an arbitrarily tight lower bound.

 In order to prove strong duality, we  must first derive the necessary optimality conditions for $\bold{D}$.  Omitting dependence on $t$ and $\epsilon$ for simplicity, the Hamiltonian function associated  with $\bold{D}$ is defined as
\begin{align*}
&H^\bold{D}(\hat{\gamma}, \hat{\rho},\hat{\mu})=\\
&-\frac{1}{2}( \hat{\rho}^TQ^{-1}\hat{\rho}+\hat{\gamma}^TbR^{-1}b^T\hat{\gamma})+\hat{\mu}^T(-A^T\hat{\gamma}+\hat{\rho}),\end{align*}  
where $ \hat{\mu} \in W^{1,2}([0,1]; \mathds{R}^{m+n} )$ as a function of $t$ is the co-state variable. The scaling $\hat{\mu} \rightarrow I^{\frac{1}{\epsilon}}\hat{\mu}$ recovers the standard form of the Hamiltonian. 
  Since there are no boundary conditions at the initial and final time, we have a normal multiplier.  
 Let $\rho$, $\gamma$, $\mu$ denote the optimal control, state and co-state respectively of $\bold{D}$. The necessary optimality conditions which these variables must satisfy are given by (see \cite{Clarke})
 \begin{align}\label{d opt cond}
\begin{split}
I^\epsilon\frac{d \mu}{dt} &=A \mu + bR^{-1}b^T \gamma,\\
I^\epsilon\frac{d \gamma}{dt}&=-A^T\gamma+\rho,
\end{split}
\end{align}
along with the boundary conditions
\begin{align}  \label{d bound cond1}
\mu(0,\epsilon)=z_0, \hspace{5mm} \mu(1,\epsilon) = -\pi(\epsilon)^{-1} I^\epsilon  \gamma(1,\epsilon).
\end{align}
As the optimal control in $\bold{D}$ is unconstrained, the Pontryagin maximum principle states that $\rho$ must satisfy the equality, $\frac{dH^{\bold{D}}}{d{\rho}}~=~0$. Hence
\begin{equation}\label{dual control}
\rho=Q\mu,
\end{equation}
where $\mu$ must satisfy the relevant optimality conditions given in \eqref{d opt cond} and \eqref{d bound cond1}.  

\begin{proof}[Theorem \ref{strong duality}]
Suppose that $z$, $\chi$, and $u$ denote the optimal state, co-state and control respectively of $\bold{P}$.   Consider the following definitions for $\gamma$, $\mu$, and $\rho$
\begin{align}  \label{equivalence}  \begin{split}
\gamma(t,\epsilon)&=-\chi(t,\epsilon), \\
 \mu(t,\epsilon)&=z(t,\epsilon), \\
 \rho(t,\epsilon)&=Q(t,\epsilon)z(t,\epsilon), \end{split}
\end{align} for $t\in [0,1]$ , $ \epsilon \in (0,\epsilon^*]$. We first show that $(\gamma, \rho)$ is a feasible solution for the dual problem and then show that this solution is optimal and that strong duality holds.

Substituting the definitions in \eqref{equivalence} into the differential equation for $\chi$ in  \eqref{p opt cond} and the boundary conditions  \eqref{p bound cond1} yields
\begin{align} \begin{split}
I^\epsilon \dot{\gamma}&=-A^T\gamma+\rho,\\
\label{mu z}
\mu(0,\epsilon)&=z_0,\\
\mu(1,\epsilon)&=-\pi(\epsilon)^{-1}I^\epsilon \gamma(1,\epsilon).
\end{split}
\end{align} The equations in  \eqref{mu z} are equivalent to the equations for $\gamma$ in \eqref{d opt cond} with boundary conditions in \eqref{d bound cond1} for $\bold{D}$.    Hence $(\gamma, \rho)$ is a feasible solution of the dual.   \\\\
  From weak duality, we know $  V_\bold{D}(\epsilon)\leq V_\bold{P}(\epsilon) $ (see \cite{EkelandTemam}, Chap. 2).  To show that there is a zero duality gap, we need to show $V_\bold{P}(\epsilon)=V_\bold{D}(\epsilon)$.  Evaluating \eqref{jd}  with the state and control given in \eqref{equivalence} along with the substitution   $\gamma(1)=-I^{\frac{1}{\epsilon}} \pi \mu(1) =-I^{\frac{1}{\epsilon}}\pi z(1)$  yields
  \begin{align*} \begin{split} 
&J_\bold{D}(Qz,-\chi,-\chi(0), -I^{\frac{1}{\epsilon}} \pi z(1), \epsilon)= \\
&\frac{1}{2}\int_0^1 \bigg{(} -z^TQz- \chi^T b R^{-1} b^T \chi \bigg{)}\hspace{1mm} \mathrm{d}t +\chi(0)^TI^\epsilon z_0\\
 &-\frac{1}{2} z(1)^T\pi z(1),\\
&=\frac{1}{2}\int_0^1 \bigg{(} -z^TQz -\chi^T b R^{-1} b^T \chi \hspace{-0.6mm}-\hspace{-0.6mm}\langle I^\epsilon\dot{\chi} ,z \rangle \hspace{-0.6mm}-\hspace{-0.6mm} \langle \chi ,I^\epsilon\dot{z}\rangle \hspace{-1mm}\bigg{)} \hspace{1mm} \mathrm{d}t \hspace{-0.6mm}\\&+\hspace{-0.6mm}\frac{1}{2} z(1)^T\pi z(1). \end{split}
\end{align*}
From the differential equations in \eqref{p opt cond}, we can evaluate the inner products in the integrand. Hence,
\begin{align*}
&J_\bold{D}(Qz, -I^{\frac{1}{\epsilon}} \pi z(1), \epsilon)= \frac{1}{2}\int_0^1 \bigg{(}z^TQz +\chi^Tb R^{-1} b^T\chi \bigg{)} \mathrm{d}t \\
&+\frac{1}{2} z(1)^T\pi z(1), \\
&= \frac{1}{2}\int_0^1\bigg{(}z^TQz + uRu \bigg{)} \hspace{1mm} \mathrm{d}t +\frac{1}{2} z(1)^T\pi z(1) \\
 &= V_\bold{P}(\epsilon).
\end{align*}
Since $(\gamma,\rho)$ is a feasible solution, by weak duality, we must have that $(\gamma,\rho)$  is optimal and $V_\bold{D}(\epsilon)=V_\bold{P}(\epsilon)$.

\end{proof}
 As the solution to  $\bold{P}$ is unique, we can justify Remark \ref{unique solution dual}, i.e. that a unique  solution exists for $\bold{D}$.
\subsection{Proof of Theorem \ref{main theorem}}
We split the proof of Theorem \ref{main theorem} into two subsections for parts (a) and (b) respectively. Part (c) follows immediately from parts (a) and (b) along with Theorem \ref{strong duality}.   
\subsubsection{Proof of Theorem \ref{main theorem} part (a)}
The proof of Theorem \ref{main theorem} part (a) follows from a simple integration of the differential equations in $\bold{P}$ with the approximate optimal control $u^N$ in \eqref{un1}.  From the definition given in \eqref{jp}, we obtain
  \begin{align}\label{vjp}\begin{split}
 &|V_\bold{P}(\epsilon)-J_\bold{P}(u^N,\hat{z}^N,\epsilon)|\\ 
&=\bigg{|}\frac{1}{2}\int_{0}^{1} \hspace{-1mm}z^TQz+u^TRu - (\hat{z}^N)^TQ\hat{z}^N\hspace{-1mm}- \hspace{-1mm}(u^N)^TRu^N\hspace{1mm}\mathrm{d}t\\&+\frac{1}{2}z(1,\epsilon)^T\pi(\epsilon)z(1,\epsilon) -\frac{1}{2}\hat{z}^N(1,\epsilon)^T \pi (\epsilon) \hat{z}^N(1,\epsilon) \bigg{|}, \end{split}
 \end{align}
 where $\hat{z}^N$ solves the differential equations in $\bold{P}$ with control given by $u^N$. Using the variation of parameters technique, we may write $\hat{z}^N$ and $z$ respectively as
\begin{align}\begin{split}\label{z1}
\hat{z}^N(t,\epsilon)&=\Phi_{I^{\frac{1}{\epsilon}} A}(t,0,\epsilon)z_0+\int_0^t \Phi_{I^{\frac{1}{\epsilon}} A}(t,s,\epsilon) I^{\frac{1}{\epsilon}} bu^N\mathrm{d}s,\\
z(t,\epsilon)&=\Phi_{I^{\frac{1}{\epsilon}} A}(t,0,\epsilon)z_0+\int_0^t \Phi_{I^{\frac{1}{\epsilon}} A}(t,s,\epsilon) I^{\frac{1}{\epsilon}}b u \hspace{1mm} \mathrm{d}s,
\end{split}
\end{align}
where $\Phi_{I^{\frac{1}{\epsilon}} A}$ is the resolvent matrix for the differential equations in \textbf{P}.  
Note that for a resolvent matrix $\Phi_x$ with $x\in \mathds{R}^{(m+n)\times(m+n)}$, the  following conditions must be satisfied for all $t\in [0,1]$  \vspace{2mm}
 \begin{enumerate}
 \item$ \frac{ d\Phi_x }{dt} = x(t,\epsilon) \Phi_x, \\$ 
 \item $\Phi_{x} (t,t,\epsilon) = I_{m+n},$\\
\item$ \det (\Phi_x) \neq 0.$
 \end{enumerate}\vspace{1mm}
 Let us partition the matrix $\Phi_{I^{\frac{1}{\epsilon}} A}$  as follows
\begin{align*}
\Phi_{I^{\frac{1}{\epsilon}} A} (t,s,\epsilon) = \begin{bmatrix} \phi_{11}  (t,s,\epsilon) &\phi_{12} (t,s,\epsilon) \\\phi_{21}  (t,s,\epsilon) &\phi_{22} (t,s,\epsilon)  \end{bmatrix},
\end{align*}
where $\phi_{11} \in \mathds{R}^{m\times m}$ and $\phi_{22}  \in \mathds{R}^{n\times n}$. 
  A well known result (see \cite{Flatto} and \cite{Smith}, Theorem 6.1.2) guarantees that the matrices $\phi_{ij}$ for $i,j=1,2$ satisfy the following bounds
  \begin{align}\begin{split}\label{phis}
  &||  \phi_{11}  (t,s,\epsilon)   ||_\infty \leq M_{11}, \hspace{0.5mm}   ||  \phi_{12}  (t,s,\epsilon)   ||_\infty \leq  \epsilon M_{12},\\
  &||  \phi_{21}  (t,s,\epsilon)   ||_\infty \leq M_{21}, \hspace{0.5mm}   ||  \phi_{22}  (t,s,\epsilon)   ||_\infty \leq  (\epsilon\hspace{-0.6mm} + \hspace{-0.6mm} e^{\frac{-k(t-s)}{\epsilon}}  )M_{22},
  \end{split}
  \end{align}
uniformly for all $0\leq s\leq t\leq1$, $\epsilon ~\in~ (0,\epsilon^*]$ where $k$ and $M_{ij}$ for $i,j=1,2$ are fixed, positive constants and the norm is defined as
\begin{equation*} \label{matrix norm}
||x||_\infty  = \max_{ij} |x_{ij}|,
\end{equation*}
for any matrix $x$.    As $b$ is  bounded for $t\in[0,1]$ and $\epsilon \in (0,\epsilon^*]$, it follows from  \eqref{un1}, \eqref{z1} and \eqref{phis} that
  \begin{equation} \label{z}
  z(t,\epsilon)=\hat{z}^N(t,\epsilon)+O(\epsilon^{N+1})\hspace{3mm} \mbox{as} \hspace{2mm}\epsilon \rightarrow 0,
  \end{equation}  uniformly on $[0,1]$. Substituting \eqref{un1} and  \eqref{z} into \eqref{vjp}, gives the inequality in
\eqref{pvjepsilon}. As $u$ is the optimal control, it follows that
\begin{equation*}
J_\bold{P}(u,z,\epsilon)\leq J_\bold{P}(u^N,\hat{z}^N,\epsilon).
\end{equation*}
Hence, we may conclude that  that the $O(\epsilon^{N+1})$ term in \eqref{pvjepsilon} satisfies $O(\epsilon^{N+1})<0$ for any integer $N\geq 0$ as $\epsilon \rightarrow 0$.

\subsubsection{Proof of Theorem \ref{main theorem} part (b)}
 From the definition in \eqref{jd}, we obtain
  \begin{align}\label{vdjd}\begin{split}
&\bigg{|}V_\bold{D} -J_\bold{D}({\rho}^N,\hat{\gamma}^N)\bigg{|} =\\
&\bigg{|}\frac{1}{2} \int_0^1 -\rho^TQ^{-1}\rho-\gamma^TbR^{-1}b^T\gamma  +({\rho}^N)^TQ^{-1}{\rho}^N+\\&(\hat{\gamma}^N)^TbR^{-1}b^T\hat{\gamma}^N  \mathrm{d}t -\gamma(0)^TI^\epsilon z_0 -\frac{1}{2} \gamma(1)^TI^\epsilon \pi^{-1} I^\epsilon \gamma(1)\\
 &+\hat{\gamma}^N(0)^TI^\epsilon z_0 +\frac{1}{2} \hat{\gamma}^N(1)^TI^\epsilon \pi^{-1} I^\epsilon \hat{\gamma}^N(1)  \bigg{|},
 \end{split}
 \end{align}
where $\hat{\gamma}^N$ is the state that satisfies the equations in $\boldsymbol{D}$ with control given by ${\rho}^N$ in \eqref{bold lambda control} and boundary condition \eqref{gamma1}.  We omit the dependence of \eqref{vdjd} on $t$ and $\epsilon$ for simplicity. Let  $\rho$ denote the optimal control of $\bold{D}$. It follows from \eqref{bold lambda control},  \eqref{equivalence}  and \eqref{z} that 
 \begin{equation} \label{control expansion}
\rho (t,\epsilon) ={\rho}^N(t,\epsilon)+O(\epsilon^{N+1}), \hspace{5mm} \mbox{ as } \epsilon \rightarrow 0,
 \end{equation}
 for any integer $N\geq 0$, uniformly on $[0,1]$. Furthermore, from \eqref{gamma1}, \eqref{d bound cond1}, \eqref{dual control}, and \eqref{control expansion}, it follows that 
  \begin{equation}\label{tildegamma}
 \gamma(1,\epsilon)=\hat{\gamma}^N(1,\epsilon)+O(\epsilon^{N+1}), \hspace{4mm} \mbox{ as } \epsilon \rightarrow 0,
 \end{equation}
 uniformly on $[0,1]$.  Let us define $\hat{\gamma}^N_\omega(\omega)= \hat{\gamma}(t)$ and $\gamma_\omega(\omega)=\gamma(t)$, where
\begin{equation}\label{ttrans}\omega=1-t.\end{equation}
Using the variation of parameters technique,  we may write $\hat{\gamma}^N_\omega$ and $\gamma_\omega$ respectively as

   \begin{align*}\label{gamma variation}\begin{split}
  \hat{ \gamma}^N_\omega(\omega)&=\hspace{-0.5mm}\Phi_{I^{\frac{1}{\epsilon}}A^T}(\omega,0,\epsilon) \hat{\gamma}^N_\omega(0)\hspace{-0.4mm}-\hspace{-1.4mm}\int_0^{\omega} \hspace{-1mm}\Phi_{I^{\frac{1}{\epsilon}}A^T}(\omega,r,\epsilon)I^{\frac{1}{\epsilon}}{\rho}^N\mathrm{d}r, \\
 \gamma_\omega(\omega)&=\hspace{-0.5mm}\Phi_{I^{\frac{1}{\epsilon}}A^T}(\omega,0,\epsilon) \gamma_\omega(0,\epsilon)\hspace{-0.4mm}-\hspace{-1.4mm}\int_0^{\omega} \hspace{-1mm}\Phi_{I^{\frac{1}{\epsilon}}A^T}(\omega,r,\epsilon)I^{\frac{1}{\epsilon}} \rho\hspace{1mm}\mathrm{d}r, 
 \end{split}
 \end{align*}
 where $\Phi_{I^{\frac{1}{\epsilon}}A^T}$ is the resolvent matrix for the differential equations in \eqref{d opt cond} under the transformation \eqref{ttrans}.   The matrix $\Phi_{I^{\frac{1}{\epsilon}}A^T}$  satisfies the bounds in \eqref{phis} for different fixed positive constants $k$ and $M_{ij}$ for $i,j=1,2$ (see \cite{Flatto} and \cite{Smith}, Theorem 6.1.2).   As $\Phi_{I^{\frac{1}{\epsilon}} A^T}(t,s,\epsilon)$ satisfies the bounds in \eqref{phis} for $s<t$, it follows from  \eqref{control expansion} - \eqref{tildegamma} that  
 \begin{equation}\label{eta t}
 \gamma(t,\epsilon)=\hat{\gamma}^N(t,\epsilon)+O(\epsilon^{N+1}), \hspace{4mm} \mbox{ as } \epsilon \rightarrow 0,
 \end{equation}
 uniformly on $[0,1]$.

Substituting  \eqref{control expansion} and \eqref{eta t}  into \eqref{vdjd}, we obtain the inequality in \eqref{o epsilon dual}. As $\rho$ is the optimal control, it follows that
\begin{equation*}
J_\bold{D}(\rho,\gamma,\epsilon)\geq J_\bold{D}({\rho}^N,\hat{\gamma}^N,\epsilon).
\end{equation*}
Hence, we may conclude that  that the $O(\epsilon^{N+1})$ term in \eqref{pvjepsilon} satisfies $O(\epsilon^{N+1})>0$ as $\epsilon \rightarrow 0$.

\section{Dual construction}\label{dual construction}
In this section, we outline the steps used to construct the dual problem $\bold{D}$. Following  the methods of  \cite{Alt}, \cite{Bertsekas} and \cite{Burachik}, we begin by converting the feasible set $\Sigma$ defined in \eqref{feasible set} into a subspace. We introduce dummy variables $\hat{s}_0,\hat{s}_1 \in \mathds{R}^{m+n}$ so that the problem $\bold{P}$ becomes
\begin{align*}
\begin{cases}
\underset{u\in U} \minimise &  J_\bold{P}(\hat{z},\hat{u},\epsilon , \hat{s}_0,\hat{s}_1),\\
\mbox{subject to} & (\hat{z},\hat{u},\hat{s}_0,\hat{s}_1)\in \boldsymbol{\Sigma},
\end{cases}
\end{align*}
where
\begin{align*}
J_\bold{P}(\hat{z},\hat{u},\epsilon ,\hat{s}_0,\hat{s}_1)=&\frac{1}{2}\int_{0}^{1} \hat{z}^TQ\hat{z}+\hat{u}^TR\hat{u}\hspace{0.5mm}\mathrm{d}t+\frac{1}{2}\hat{s}_1^T\pi(\epsilon)\hat{s}_1\\&+ \delta_{z_0}(\hat{s}_0) +\delta_U(\hat{u}),
\end{align*}
and 
\begin{align*}
 \boldsymbol{\Sigma}=\bigg{\{}&(\hat{z},\hat{u},\hat{s}_0,\hat{s}_1):I^\epsilon\frac{d\hat{z}}{dt}=A \hat{z}+ b\hat{ u}, \hat{z}(0,\epsilon)=\hat{s}_0,\\
& \hat{z}(1,\epsilon)= \hat{s}_1 , t\in [0,1], \epsilon \in (0,\epsilon^*]\bigg{\}}.
\end{align*} 
The $\delta$-function is defined by
\begin{equation*}
\delta_C(x) = \begin{cases} 0 & \mbox{if } x\in C,\\
+\infty & \mbox{otherwise} .\end{cases}  
\end{equation*} 
The feasible set $\boldsymbol{\Sigma}$ is now a closed subspace of $W^{1,2} \times W^{1,2} \times \mathds{R}^{n+m} \times \mathds{R}^{n+m}$.   Following standard techniques in duality theory, we introduce further dummy variables $\hat{v}_1~\in~ W^{1,2}([0,1];\mathds{R}^{m+n})$ , $\hat{v}_2\in W^{1,2}([0,1];\mathds{R}^{k})$, as functions of $t$, and  $\hat{\kappa}_0, \hat{\kappa}_1 \in \mathds{R}^{n+m}$   in order to dualise the problem.  The objective functional and conditions become
\begin{align} \label{newjp} \begin{split}
J_\bold{P}(\hat{v},\hat{w},\epsilon,\hat{\kappa}_0, \hat{\kappa}_1)=&\frac{1}{2} \int_0^1 \hat{v}_1^TQ\hat{v}_1+\hat{v}_2^TR\hat{v}_2\hspace{0.5mm}\mathrm{d}t +\frac{1}{2}\hat{\kappa}_1^T\pi(\epsilon)\hat{\kappa}_1 \\&+\delta_{z_0}(\hat{\kappa}_0) +\delta_U(\hat{w}),
\end{split}
\end{align}
subject to
\begin{align*}
&\hat{v}_1(t,\epsilon)=\hat{z}(t,\epsilon), \hspace{3mm} \hat{v}_2(t,\epsilon)=\hat{u}(t,\epsilon), \hspace{3mm}\hat{\kappa}_0=\hat{s}_0,\hspace{3mm} \hat{\kappa}_1=\hat{s}_1,\\& (\hat{z},\hat{u},\hat{s}_0,\hat{s}_1)\in \boldsymbol{\Sigma}.
\end{align*}
We separate the terms in \eqref{newjp} into the following four functions
\begin{align*}
f_1(\hat{v}_1)&=\frac{1}{2} \int_0^1 \hat{v}_1^TQ\hat{v}_1\mathrm{d}t,\\
f_2(\hat{v}_2)&=\frac{1}{2} \int_0^1 \hat{v}_2^TR\hat{v}_2\mathrm{d}t, \\
f_3(\hat{\kappa}_0)&=\delta_{z_0}(\hat{\kappa}_0),\\
f_4(\hat{\kappa}_1)&=\frac{1}{2}\hat{\kappa}_1^T\pi\hat{\kappa}_1.
\end{align*}
The dual functional $J_\bold{D}$ can be written by Fenchel duality \cite{Fenchel} as
\begin{align*}
&J_\bold{D}(\hat{\rho},\hat{\lambda}_2,\hat{\lambda}_3,\hat{\lambda}_4) =\\& \begin{cases}-f_1^*(\hat{\rho})\hspace{-0.5mm}-\hspace{-0.5mm}f_2^*(\hat{\lambda}_2)\hspace{-0.5mm} - \hspace{-0.5mm}f^*_3(\hat{\lambda}_3)\hspace{-0.5mm}- \hspace{-0.5mm}f^*_4(\hat{\lambda}_4)  & \hspace{-2mm} \mbox{if } (\hat{\rho},\hat{\lambda}_2,\hat{\lambda}_3,\hat{\lambda}_4) \in \boldsymbol{\Sigma}_1 ,\\ -\infty &\hspace{-2mm} \mbox{otherwise},\end{cases}
\end{align*}
where $\boldsymbol{\Sigma}_1$ is orthogonal to $\boldsymbol{\Sigma}$ and $f^*_1(\hat{\rho})$, $f_2^*(\hat{\lambda}_2)$, $f^*_3(\hat{\lambda}_3)$, and $f^*_4(\hat{\lambda}_4)$ are the Fenchel duals (see \cite{Bertsekas}, \cite{Burachik}) of $f_1(\hat{v}_1)$, $f_2(\hat{v}_2)$,$f_3(\hat{\kappa}_0)$ and $f_4(\hat{\kappa}_1)$  respectively. The Fenchel duals are defined as follows
\begin{align} \label{fenchel dual}
\begin{split}
f^*_1(\hat{\rho})&=\sup_{\hat{v}_1\in W^{1,2}} \int_0^1 \hat{\rho}^T\hat{v}_1- \frac{1}{2}\hat{v}_1^TQ\hat{v}_1 \mathrm{d}t,
 \\
f_2^*(\hat{\lambda}_2)&=\sup_{\hat{v}_2\in W^{1,2}} \int_0^1 \hat{\lambda}_2^T\hat{v_2}- \frac{1}{2} \hat{v_2}^TR\hat{v_2} \mathrm{d}t,\\
f^*_3(\hat{\lambda}_3)&=\sup_{\hat{\kappa}_0 \in \mathds{R}^{m+n}}  \hat{\lambda}_3(\epsilon)^T \hat{\kappa}_0 - \delta_{z_0}(\hat{\kappa}_0),\\
 f^*_4(\hat{\lambda}_4)&=\sup_{\hat{\kappa}_1 \in \mathds{R}^{m+n}}  \hat{\lambda}_4(\epsilon)^T \hat{\kappa}_1- \frac{1}{2}\hat{\kappa}_1^T\pi\hat{\kappa}_1.
\end{split}
\end{align}
Hence, the dual problem can be written as
\begin{align} \label{Dual Functional 2}
\hspace{2mm}\begin{cases}
\hspace{4mm}\underset{\hat{\rho},\hat{\lambda}_2,\hat{\lambda}_3,\hat{\lambda}_4} \maximise& -f^*_1(\hat{\rho})-f_2^*(\hat{\lambda}_2)-f_3^*(\hat{\lambda}_3)- f_4^*(\hat{\lambda}_4),\\ 
\hspace{4mm} \mbox{subject to } &(\hat{\rho},\hat{\lambda}_2,\hat{\lambda}_3,\hat{\lambda}_4)\in \boldsymbol{\Sigma}_1,
 \end{cases}
\end{align}
We will simplify the problem in \eqref{Dual Functional 2} by finding the solutions to the functions in \eqref{fenchel dual}. 
We may rewrite $-f^*_1(\hat{\rho})$  and $-f^*_2(\hat{\lambda}_2)$ respectively as
\begin{align*}
-f^*_1(\hat{\rho})= \inf_{\hat{v_1}(t,\epsilon)\in W^{1,2}}\int_0^1 F_1(\hat{v_1},t,\epsilon) dt,\\
-f^*_2(\hat{\lambda}_2)= \inf_{\hat{v_2}(t,\epsilon)\in W^{1,2}}\int_0^1 F_2(\hat{v_2},t,\epsilon) dt,
\end{align*}
where 
\begin{equation}\begin{split}\label{f*1}
F_1(\hat{v_1},t,\epsilon)&=  \frac{1}{2} \hat{v_1}^TQ\hat{v_1} - \hat{\rho}^T\hat{v_1},\\
F_2(\hat{v_2},t,\epsilon)&=  \frac{1}{2} \hat{v_2}^TR\hat{v_2} - \hat{\lambda}_2^T\hat{v_2}.
\end{split}
\end{equation}   Let $v_i$, $i=1,2$ denote the optimal solution to the corresponding equations in \eqref{f*1}.  From calculus of variations, we know that $v_i$, $i=1,2$ solves \eqref{f*1} if and only if it satisfies the respective Euler-Lagrange equation
\begin{equation*}
\frac{d}{dt}\frac{\partial F_i}{\partial \dot{v_i}}=\frac{\partial F_i}{\partial v_i}, \hspace{5mm} i=1,2.
\end{equation*}
Since $\frac{\partial F_i}{\partial \dot{v}_i}=0$, i=1,2, we have $\frac{\partial F_i}{\partial v_i}=0$ for $i=1,2$.  Hence
\begin{equation*}
\rho=Qv_1, \hspace{5mm} \lambda_2=Rv_2,
\end{equation*} It follows that 
\begin{align}\label{f1}\begin{split}
f^*_1(\hat{\rho})&=\frac{1}{2} \int_0^1 \hat{\rho}^TQ^{-1} \hat{\rho}\hspace{0.5mm}\mathrm{d}t,\\
f^*_2(\hat{\lambda}_1)&=\frac{1}{2} \int_0^1 \hat{\lambda}_2^TR(t,\epsilon)^{-1} \hat{\lambda}_2(t,\epsilon)dt.
\end{split}
\end{align}
Solving $ f_3^*(\hat{\lambda}_3)$ and $f_4^*(\hat{\lambda}_4)$ we obtain,
\begin{align}\begin{split}
\label{f*3}f_3^*(\hat{\lambda}_3)&=  \hat{\lambda}_3(\epsilon)^Tz_0,\\ 
 f_4^*(\hat{\lambda}_4)&= \frac{1}{2}\hat{\lambda}_4(\epsilon)^T\pi(\epsilon)^{-1} \hat{\lambda}_4(\epsilon).\end{split}
\end{align}
Substituting \eqref{f1} and  \eqref{f*3} into \eqref{Dual Functional 2}, the dual problem becomes
\begin{align}
\begin{cases} \label{almostjd1}
 \maximise\limits_{(\hat{\rho},\hat{\lambda}_2,\hat{\lambda}_3,\hat{\lambda}_4)} &\hspace{-3mm}J_\bold{D}(\hat{\rho},\hat{\lambda}_2,\hat{\lambda}_3,\hat{\lambda}_4,\epsilon),\vspace{3mm} \\ 
\hspace{2mm} \mbox{subject to } &\hspace{-3mm}(\hat{\rho},\hat{\lambda}_2,\hat{\lambda}_3,\hat{\lambda}_4)\in \boldsymbol{\Sigma}_1,
\end{cases}
\end{align}
where
\begin{align} \label{almostjd}
J_\bold{D}\hspace{-0.5mm}=\hspace{-1mm}\frac{1}{2} &\displaystyle\int_0^1 - \hat{\rho}^TQ^{-1}\hat{\rho}- \hat{\lambda}_2^TR^{-1} \hat{\lambda}_2 \mathrm{d}t- \hat{\lambda}_3(\epsilon)^T z_0 \\
&-\frac{1}{2}  \hat{\lambda}_4(\epsilon)^T\pi(\epsilon)^{-1}  \hat{\lambda}_4(\epsilon).
\end{align}
The following lemma will allow us to prove Theorem \ref{strong duality}.  The derivation below is similar to that in \cite{Burachik} but modified to allow for the singularly perturbed dynamics.
\begin{lemma} \label{lemmadual}
The subspace $\boldsymbol{\Sigma}_1$, orthogonal to $\boldsymbol{\Sigma}$, is given by
\begin{align*}\label{Dual Subspace}
\begin{split}
\boldsymbol{\Sigma}_1=\bigg{\{}& (\hat{\rho},\hat{\lambda}_2,\hat{\lambda}_3,\hat{\lambda}_4):  \hat{\rho}=I^\epsilon \frac{d\hat{\gamma}}{dt}+A^T\hat{\gamma},\hspace{2mm}\hat{\lambda}_2=b^T\hat{\gamma}, \\& \hat{\lambda}_3= I^\epsilon \hat{\gamma } (0), \hspace{2mm} \hat{\lambda}_4=-I^\epsilon  \hat{\gamma}(1),\hspace{2mm} t\in[0,1], \hspace{2mm}\epsilon \in(0,\epsilon^*]\bigg{\}},
\end{split}
\end{align*}
where
\begin{equation*}
\hat{\gamma}(t,\epsilon)=-  I^{\frac{1}{\epsilon}}\bigg{(} \int_t^1 \Phi_{I^{\frac{1}{\epsilon}}A}(s,t)^T\hat{\rho}\hspace{0.5mm}\mathrm{d}t - \Phi_{I^{\frac{1}{\epsilon}}A}(1,t)^T I^{\epsilon} \hat{\gamma}(1) \bigg{)}.
\end{equation*}
and $\Phi_{I^{\frac{1}{\epsilon}}A}$ is the resolvent matrix of the differential equations in $\bold{P}$.  \end{lemma}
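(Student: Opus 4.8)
The plan is to recognise $\boldsymbol{\Sigma}_1$ as the annihilator of the subspace $\boldsymbol{\Sigma}$ under the pairing inherited from the Fenchel construction, namely $\langle(\hat{\rho},\hat{\lambda}_2,\hat{\lambda}_3,\hat{\lambda}_4),(\hat{v}_1,\hat{v}_2,\hat{\kappa}_0,\hat{\kappa}_1)\rangle=\int_0^1 \hat{\rho}^T\hat{v}_1+\hat{\lambda}_2^T\hat{v}_2\,\mathrm{d}t+\hat{\lambda}_3^T\hat{\kappa}_0+\hat{\lambda}_4^T\hat{\kappa}_1$, and then to prove the claimed description by a double inclusion. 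Since every element of $\boldsymbol{\Sigma}$ satisfies $\hat{\kappa}_0=\hat{v}_1(0)$, $\hat{\kappa}_1=\hat{v}_1(1)$ and $I^\epsilon\dot{\hat{v}}_1=A\hat{v}_1+b\hat{v}_2$, membership in $\boldsymbol{\Sigma}_1$ reduces to the single identity $\int_0^1 \hat{\rho}^T\hat{v}_1+\hat{\lambda}_2^T\hat{v}_2\,\mathrm{d}t+\hat{\lambda}_3^T\hat{v}_1(0)+\hat{\lambda}_4^T\hat{v}_1(1)=0$ holding for every admissible pair $(\hat{v}_1,\hat{v}_2)$.

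For the inclusion of the stated set into $\boldsymbol{\Sigma}_1$, I would take a tuple of the stated form, substitute $\hat{\rho}=I^\epsilon\dot{\hat{\gamma}}+A^T\hat{\gamma}$, $\hat{\lambda}_2=b^T\hat{\gamma}$, $\hat{\lambda}_3=I^\epsilon\hat{\gamma}(0)$, $\hat{\lambda}_4=-I^\epsilon\hat{\gamma}(1)$ into this identity, and group the two integrals as $\int_0^1 \dot{\hat{\gamma}}^T I^\epsilon\hat{v}_1+\hat{\gamma}^T(A\hat{v}_1+b\hat{v}_2)\,\mathrm{d}t$. Using the constraint $A\hat{v}_1+b\hat{v}_2=I^\epsilon\dot{\hat{v}}_1$ together with the symmetry of $I^\epsilon$, the integrand becomes the exact derivative $\frac{d}{dt}(\hat{\gamma}^T I^\epsilon\hat{v}_1)$, so integration by parts yields exactly $\hat{\gamma}(1)^TI^\epsilon\hat{v}_1(1)-\hat{\gamma}(0)^TI^\epsilon\hat{v}_1(0)$, which cancels the boundary contributions $\hat{\lambda}_3^T\hat{v}_1(0)+\hat{\lambda}_4^T\hat{v}_1(1)$. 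Hence every such tuple annihilates $\boldsymbol{\Sigma}$.

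For the reverse inclusion, given an arbitrary $(\hat{\rho},\hat{\lambda}_2,\hat{\lambda}_3,\hat{\lambda}_4)\in\boldsymbol{\Sigma}_1$, I would define $\hat{\gamma}$ to be the unique $W^{1,2}$ solution of the adjoint equation $I^\epsilon\dot{\hat{\gamma}}+A^T\hat{\gamma}=\hat{\rho}$ subject to the terminal condition $\hat{\gamma}(1)=-I^{\frac{1}{\epsilon}}\hat{\lambda}_4$; a direct differentiation, using $\frac{\partial}{\partial t}\Phi_{I^{\frac{1}{\epsilon}}A}(s,t)^T=-A^TI^{\frac{1}{\epsilon}}\Phi_{I^{\frac{1}{\epsilon}}A}(s,t)^T$, shows this solution is precisely the variation-of-parameters formula stated in the lemma, so $\hat{\rho}=I^\epsilon\dot{\hat{\gamma}}+A^T\hat{\gamma}$ and $\hat{\lambda}_4=-I^\epsilon\hat{\gamma}(1)$ hold by construction. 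It then remains to recover $\hat{\lambda}_2=b^T\hat{\gamma}$ and $\hat{\lambda}_3=I^\epsilon\hat{\gamma}(0)$. Subtracting the annihilation identity produced in the forward step for this particular $\hat{\gamma}$ from the defining orthogonality identity leaves $\int_0^1(\hat{\lambda}_2-b^T\hat{\gamma})^T\hat{v}_2\,\mathrm{d}t+(\hat{\lambda}_3-I^\epsilon\hat{\gamma}(0))^T\hat{v}_1(0)=0$ for every admissible $(\hat{v}_1,\hat{v}_2)$. Choosing $\hat{v}_1(0)=0$ and letting $\hat{v}_2$ range over all of $W^{1,2}([0,1];\mathds{R}^k)$, with $\hat{v}_1$ then determined by the dynamics, forces $\hat{\lambda}_2=b^T\hat{\gamma}$ by the fundamental lemma of the calculus of variations; subsequently taking $\hat{v}_2=0$ and letting the free initial value $\hat{v}_1(0)$ sweep out $\mathds{R}^{m+n}$ forces $\hat{\lambda}_3=I^\epsilon\hat{\gamma}(0)$.

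The main obstacle is this reverse inclusion: one must argue that the admissible test pairs $(\hat{v}_1,\hat{v}_2)$ in $\boldsymbol{\Sigma}$ are rich enough to separate the components, which is exactly where the freedom to prescribe $\hat{v}_2$ and $\hat{v}_1(0)$ independently becomes essential, and one must keep careful track of all boundary terms arising from the integration by parts while ensuring the constructed $\hat{\gamma}$ genuinely lies in $W^{1,2}$. The verification that the displayed integral formula solves the adjoint ODE with the correct terminal value is routine though bookkeeping-heavy, and I would relegate it to a single differentiation.
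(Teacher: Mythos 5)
Your proof is correct, and it takes a genuinely different route from the paper's. The paper argues in one direction only: it substitutes the variation-of-parameters representation $z(t)=\Phi_{I^{\frac{1}{\epsilon}}A}(t,0)s_0+\int_0^t\Phi_{I^{\frac{1}{\epsilon}}A}(t,s)I^{\frac{1}{\epsilon}}b\,u\,\mathrm{d}s$ (with $s_1=z(1)$) into the orthogonality relation, interchanges the order of integration, and sets the coefficients of the free data $s_0$ and $u$ to zero; the function $\hat{\gamma}$ is then \emph{defined} by the resulting integral kernel, after which $\hat{\lambda}_2=b^T\hat{\gamma}$ and $\hat{\lambda}_4=-I^\epsilon\hat{\gamma}(1)$ are read off, the adjoint equation $I^\epsilon\dot{\hat{\gamma}}=-A^T\hat{\gamma}+\hat{\rho}$ is observed after the fact, and $\hat{\lambda}_3=I^\epsilon\hat{\gamma}(0)$ emerges from an integration by parts in the $s_0$-coefficient equation. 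You reverse this order of construction: you define $\hat{\gamma}$ as the adjoint solution with terminal datum $-I^{\frac{1}{\epsilon}}\hat{\lambda}_4$, verify by a single differentiation that it matches the displayed formula, and recover $\hat{\lambda}_2$ and $\hat{\lambda}_3$ via your subtraction trick and the separation argument ($\hat{v}_1(0)=0$ with $\hat{v}_2$ free, then $\hat{v}_2=0$ with $\hat{v}_1(0)$ free) --- this exploits the same freeness of $(s_0,u)$ as the paper's ``since $s_0$ and $u$ are arbitrary'' step, but avoids the Fubini manipulation of resolvent kernels entirely. Notably, your forward inclusion (every tuple of the stated form annihilates $\boldsymbol{\Sigma}$, via the exact derivative $\frac{d}{dt}\bigl(\hat{\gamma}^TI^\epsilon\hat{v}_1\bigr)$ and cancellation of boundary terms) is not carried out in the paper at all, which only establishes $\boldsymbol{\Sigma}_1\subseteq\{\dots\}$; since the lemma asserts an equality of sets, your double-inclusion argument is actually the more complete one. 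Two minor remarks: your fundamental-lemma step gives $\hat{\lambda}_2=b^T\hat{\gamma}$ only almost everywhere, upgraded to everywhere by continuity of both sides in $W^{1,2}$, and the integrability you worry about is immediate for fixed $\epsilon>0$ (the paper invokes the resolvent bounds \eqref{phis} to make this uniform).
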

\begin{proof} 
For simplicity we omit the dependence on $\epsilon$ for all variables and we omit the $\hat{\cdot}$ notation. Let $(\rho,\lambda_2,\lambda_3,\lambda_4) \hspace{-1mm}~\in~\hspace{-1mm} \boldsymbol{\Sigma}_1$.  As $\boldsymbol{\Sigma}_1$ is orthogonal to $\boldsymbol{\Sigma}$
\begin{equation} \label{orthogonal}
\lambda_4^Ts_1+\lambda_3^Ts_0+\int_0^1 \rho^Tz+\lambda_2^Tu \hspace{0.5mm} \mathrm{d}t=0 .
\end{equation}
The solution to the differential equations in \textbf{P} for time $t$ and time $t=1$ is given by
\begin{align}\label{s0}
\begin{split}
z(t)&=\Phi_{I^{\frac{1}{\epsilon}}A}(t,0)s_0 +\int_0^t \Phi_{I^{\frac{1}{\epsilon}}A}(t,s) I^{\frac{1}{\epsilon}}bu\hspace{0.5mm} \mathrm{d}s,\\
s_1=z(1)&=\Phi_{I^{\frac{1}{\epsilon}}A}(1,0)s_0 +\int_0^1 \Phi_{I^{\frac{1}{\epsilon}}A}(1,s) I^{\frac{1}{\epsilon}}bu \hspace{0.5mm} \mathrm{d}s,
\end{split}
\end{align}
respectively. Note that the equations in \eqref{s0} are integrable because the resolvent matrix satisfies the bounds in \eqref{phis}. Substituting  \eqref{s0} into \eqref{orthogonal}  and changing the order of integration yields
\begin{align*}
&0=\bigg{(}\lambda_4^T\Phi_{I^{\frac{1}{\epsilon}}A}(1,0) +  \lambda_3^T+ \int_0^1 \rho^T\Phi_{I^{\frac{1}{\epsilon}}A}(s,0)ds\bigg{)} s_0+\\& \int_0^1 \hspace{-1mm} \bigg{(}\lambda_2^T \hspace{-1mm} + \hspace{-1mm} \bigg{(} \int_s^1  \rho^T \Phi_{I^{\frac{1}{\epsilon}}A}(t,s)\mathrm{d}t + \lambda_4^T\Phi_{I^{\frac{1}{\epsilon}}A} (1,s)   \hspace{-1mm}  \bigg{)} I^{\frac{1}{\epsilon}} b \bigg{)} u \mathrm{d}s.
\end{align*}
Since $s_0$ and $u$ are arbitrary we obtain the following equations
\begin{align}
 \label{orthogonal 1}&0= \lambda_4^T\Phi_{I^{\frac{1}{\epsilon}}A}(1,0)   +  \lambda_3^T+ \int_0^1 \rho(s)^T\Phi_{I^{\frac{1}{\epsilon}}A}(s,0)ds,\\
\label{orthogonal 2} &0=\lambda_2(s)\hspace{-0.8mm}+\hspace{-0.8mm}b(s)^T I^{\frac{1}{\epsilon}}\bigg{(} \int_s^1\hspace{-1.8mm}  \Phi_{I^{\frac{1}{\epsilon}}A}(t,s)^T \rho\mathrm{d}t    + \Phi_{I^{\frac{1}{\epsilon}}A}(1,s)^T \lambda_4\bigg{)}.
\end{align}
Define
\begin{equation} \label{gamma}
\gamma(s)=-I^{\frac{1}{\epsilon}}\bigg{(} \int_s^1  \Phi_{I^{\frac{1}{\epsilon}}A}(t,s)^T \rho \hspace{0.5mm}\mathrm{d}t + \Phi_{I^{\frac{1}{\epsilon}}A}(1,s)^T \lambda_4 \bigg{)} .
\end{equation}
Substituting \eqref{gamma} into \eqref{orthogonal 2}
\begin{equation} \label{lambda 2}
\lambda_2(s)= b^T(s) \gamma(s).
\end{equation}
Setting $s=1$ in \eqref{gamma} we get 
\begin{equation} \label{lambda 4}
\gamma(1)= - I^{\frac{1}{\epsilon}} \lambda_4.
\end{equation}   Note that our expression for $\gamma$  in \eqref{gamma} is the general solution to
\begin{equation} \label{lambda 11}
 I^\epsilon \dot{\gamma}=-A^T\gamma+\rho.
\end{equation}
Rearranging   \eqref{lambda 11} and substituting into \eqref{orthogonal 1}, 
\begin{align*}
\lambda_3^T=&-\int_0^1\dot{\gamma}^T I^\epsilon \Phi_{I^{\frac{1}{\epsilon}}A}(t,0)\mathrm{d}t  -\lambda_4^T     \Phi_{I^{\frac{1}{\epsilon}}A}(1,0)\\&-\int_0^1\gamma(t)^TA\Phi_{I^{\frac{1}{\epsilon}}A}(t,0) \mathrm{d}t.
\end{align*}
Integrating by parts gives us
\begin{align} \begin{split} \label{lambda3equation}
\lambda_3^T&=-\gamma(1)^TI^\epsilon \Phi_{I^{\frac{1}{\epsilon}}A}(1,0)+ \gamma(0)^TI^\epsilon  -\lambda_4^T     \Phi_{I^{\frac{1}{\epsilon}}A}(1,0). \end{split}
\end{align}
Substituting \eqref{lambda 4}  into  \eqref{lambda3equation}
\begin{equation} \label{lambda 3}
\lambda_3^T=\gamma(0)^TI^\epsilon .
\end{equation}
From  \eqref{lambda 2}, \eqref{lambda 4}, \eqref{lambda 11}, \eqref{lambda 3}, we obtain the equations in $\boldsymbol{\Sigma}_1$.
\end{proof}
Substituting the results of Lemma \ref{lemmadual} into \eqref{almostjd1} and  \eqref{almostjd}, we obtain the dual formulation found in $\bold{D}$.

\section{Numerical Results} \label{numerical}
In this section, we illustrate the applicability of our method with three numerical examples. The first example concerns the Digital-Fly-by-Wire (DFW) program developed by NASA \cite{Elliott} for an F-8 aircraft.  This program interprets the pilots' flight path input signals and picks the flight control  that will achieve this path while accounting for both the aircraft dynamics and the various sensors relating to aircraft performance.  The second example is an optimal control problem over the clustered consensus network of $20$ nodes featured in \cite{Boker}, \cite{Boker2}.  In the first two examples, the solver was able to compute a solution to $\textbf{P}$; however, the results clearly illustrate the potential advantages of computing upper and lower bounds instead of the approximation $V^N$ satisfying \eqref{old old result}.   In the third example, we consider a larger network consisting of  $68$ nodes for which the solver failed to compute a solution.  However, in this case, we were still able to obtain upper and lower bounds.  The concept of  using clustered consensus networks to model physical systems has gained considerable attention in recent years and has been applied to a wide variety of areas such as social networks \cite{Wasserman}, wireless sensor networks \cite{Zhao}, vehicle formation \cite{Biyik}, power systems \cite{Romeres},  and electric smart grids \cite{Chakrabortty}.  

  All simulations for the first two examples and the reduced approximation of the third example were computed using GPOPS-II within Matlab.  The upper and lower bounds of the third example were computed using the ODE45 solver within Matlab. In each of the examples, we consider the $O(\epsilon)$ approximation and upper and lower bounds. 

\subsection{Example 1}
The singularly perturbed dynamics for this problem are taken from \cite{Nguyen}.
The states variables are defined as follows
\begin{align*}
&x_1 \hspace{2mm} - \hspace{2mm}  \mbox{velocity},\\
&x_2 \hspace{2mm} - \hspace{2mm} \mbox{angle of attack (rad)},\\
&y_1 \hspace{2mm} - \hspace{2mm} \mbox{pitch rate (rad/sec)},\\
&y_2 \hspace{2mm} - \hspace{2mm} \mbox{pitch angle (rad)},
\end{align*}
and the control variable is 
\begin{align*}
u \hspace{2mm} - \hspace{2mm} \mbox{stabilator deflection (rad)}.
\end{align*}
 The above variables represent the various quantities relative to a pre-defined equilibrium position of the aircraft.  
The matrices $I^\epsilon$, $Q$, $R$  $\pi$, $A_{ij}$ and $b_i$ for $i,j=12$ are defined as follows
\begin{align*}
I^\epsilon& = \begin{bmatrix}1 &0&0&0\\
0&1&0&0\\
0&0&\epsilon &0\\
0&0&0&\epsilon \end{bmatrix},\\
Q &= I_4 ,\hspace{1mm}, R=1, \hspace{1mm} \pi  =  I^\epsilon\\
A_{11}& = \begin{bmatrix}  -0.195378 & -0.676469\\ 1.478265 &0\end{bmatrix}, \\
A_{12} &= \begin{bmatrix} -0.917160 & 0.109033\\0&0 \end{bmatrix},\\
A_{21} &= \begin{bmatrix} -0.051601&0\\0.013579 &0 \end{bmatrix},\\
A_{22}& = \begin{bmatrix} -0.367954 & 0.43804\\-2.102596 & -0.21464\end{bmatrix}, \\
b_1 & = \begin{bmatrix} -0.023109 \\ -16.945030\end{bmatrix},\hspace{1mm}b_2= \begin{bmatrix} -0.048184\\ -3.810954 \end{bmatrix}.
\end{align*}
The initial conditions for the state variables are given by
\begin{equation*}
\begin{bmatrix} -2&3&-4&1\end{bmatrix}^T
\end{equation*}
Furthermore, we set $\epsilon = 0.0336$ and the time interval to be $[0,1]$.
We wish to find the optimal control that returns the aircraft to the equilibrium position. 

The initial condition of the reduced problem is given by
\begin{equation*}
\begin{bmatrix} -2&3\end{bmatrix}^T.
\end{equation*}
The matrices associated with the reduced problem are defined as follows 
\begin{align*}
\mathcal{Q} &=  \begin{bmatrix}1.009401 &0\\
   0& 1\end{bmatrix} ,\hspace{1mm} \mathcal{R} =5.513834,\\
\mathcal{A} &= \begin{bmatrix} -0.143614 & -0.676469\\
1.050984&  0\end{bmatrix}  , \hspace{1mm} \pi_{11}^0  = I_2,\\
\mathcal{B} &= \begin{bmatrix} 1.375594\\
 -16.945030 \end{bmatrix}.
\end{align*}

The matrix $T$ in \eqref{tmatrix} is given by
\begin{equation*}
\begin{bmatrix}
 -0.1184  &  0.5421&   -0.1824  & -0.1176\\
    0.9515  & -0.4073   &-0.4093   &-0.9369\\
   -0.1579   & 0.7281   & 0.8931    &0.2075\\
    0.2360  & -0.1003  &  0.0399&    0.2556 \end{bmatrix},
    \end{equation*}
    the eigenvalues of the matrix $\Lambda(t)$ in \eqref{tlambda} are given by 
    \begin{align*}
    3.5244, \hspace{1mm} \mbox{ and } \hspace{1mm} 0.6663,
    \end{align*}
    for all $t \in [0,5]$.   The constants $c$ and $c_1$ in \eqref{cconstant} and \eqref{cconstant2} respectively are given by 
    \begin{align*}
    c= \begin{bmatrix}  3.5607 \\
   -0.7475 \end{bmatrix}, \hspace{1mm} \mbox{ and } \hspace{1mm} c_1= \begin{bmatrix}0.0765\\
   -0.0544 \end{bmatrix}.
    \end{align*}
   
The solution to the primal problem and the upper and lower bounds are plotted as $\epsilon \rightarrow 0$ in Fig. \ref{fig1}.  Furthermore, we have plotted the asymptotic approximation described in \cite{OMalley}.  It is clear that from approximately $\epsilon=0.035$, the upper and lower bound provide a better approximation to the solution than the approximation in  \cite{OMalley} and \cite{OMalleyKung}.   Hence, the bounds provide additional information to the practitioner which can often be of considerable value when considering a choice of approximation. 
\begin{figure}[!h]
\centering
\includegraphics[width=2.5in]{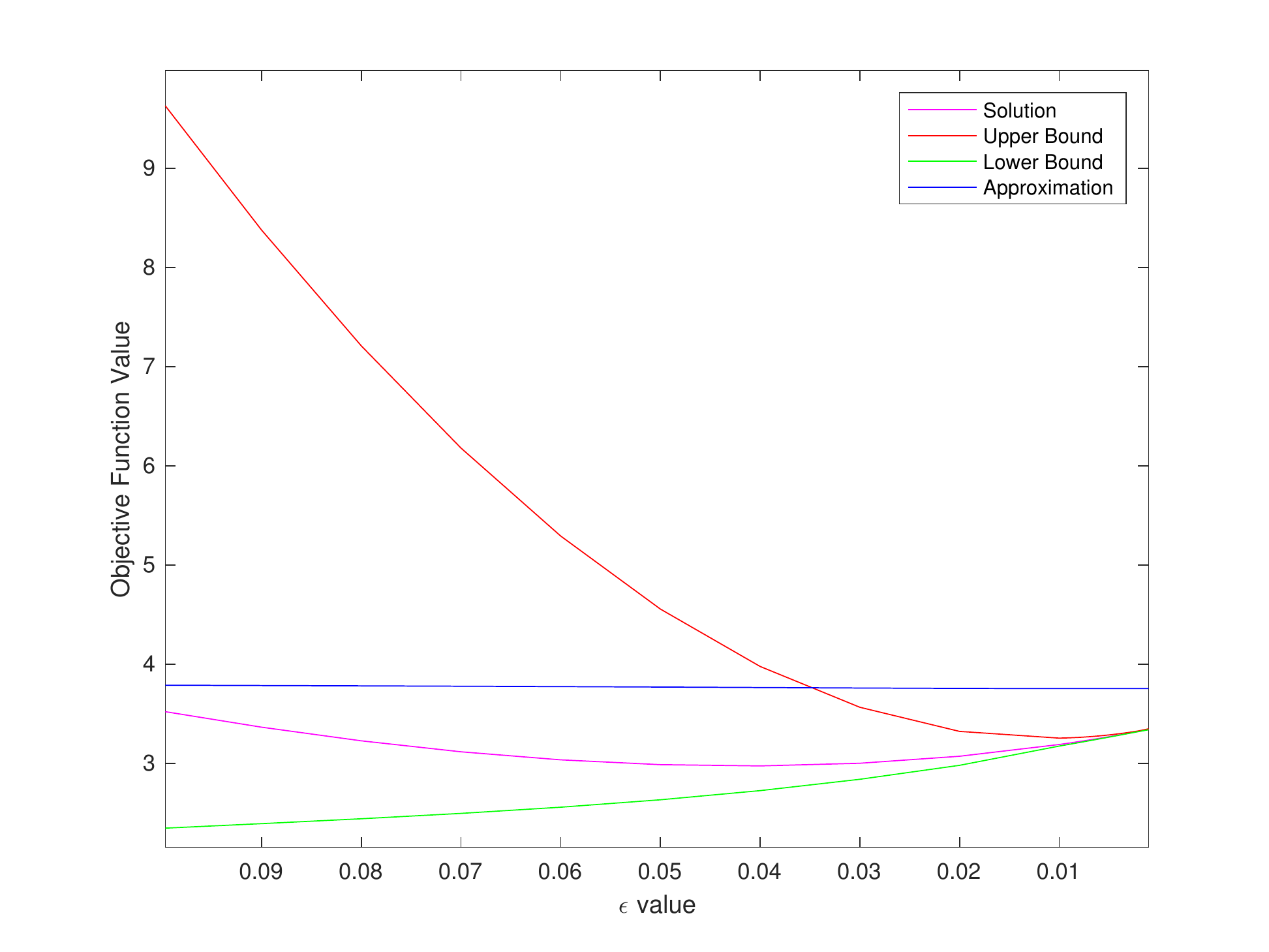}
\includegraphics[width=2.5in]{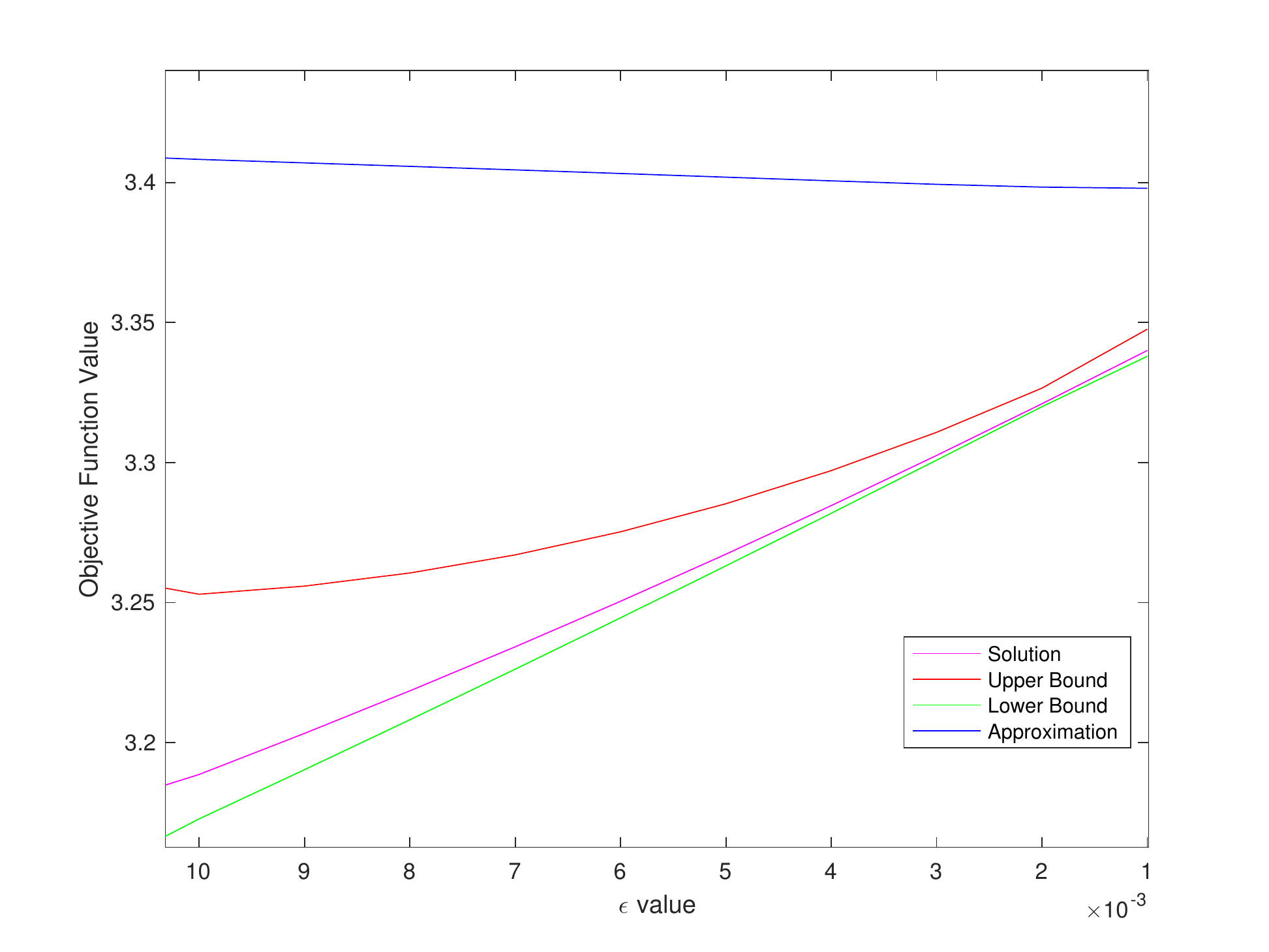}
 \caption{Upper and lower bound and approximation on the solution as $\epsilon \rightarrow 0$. }
 \label{fig1}
 \end{figure}
 
In Fig. \ref{fig2}, we have  plotted the difference between the upper and lower bounds. This graph shows that the upper and lower bounds are converging to the solution as $\epsilon \rightarrow 0$. 
   \begin{figure}[!t]
 \centering
\includegraphics[width=2.5in]{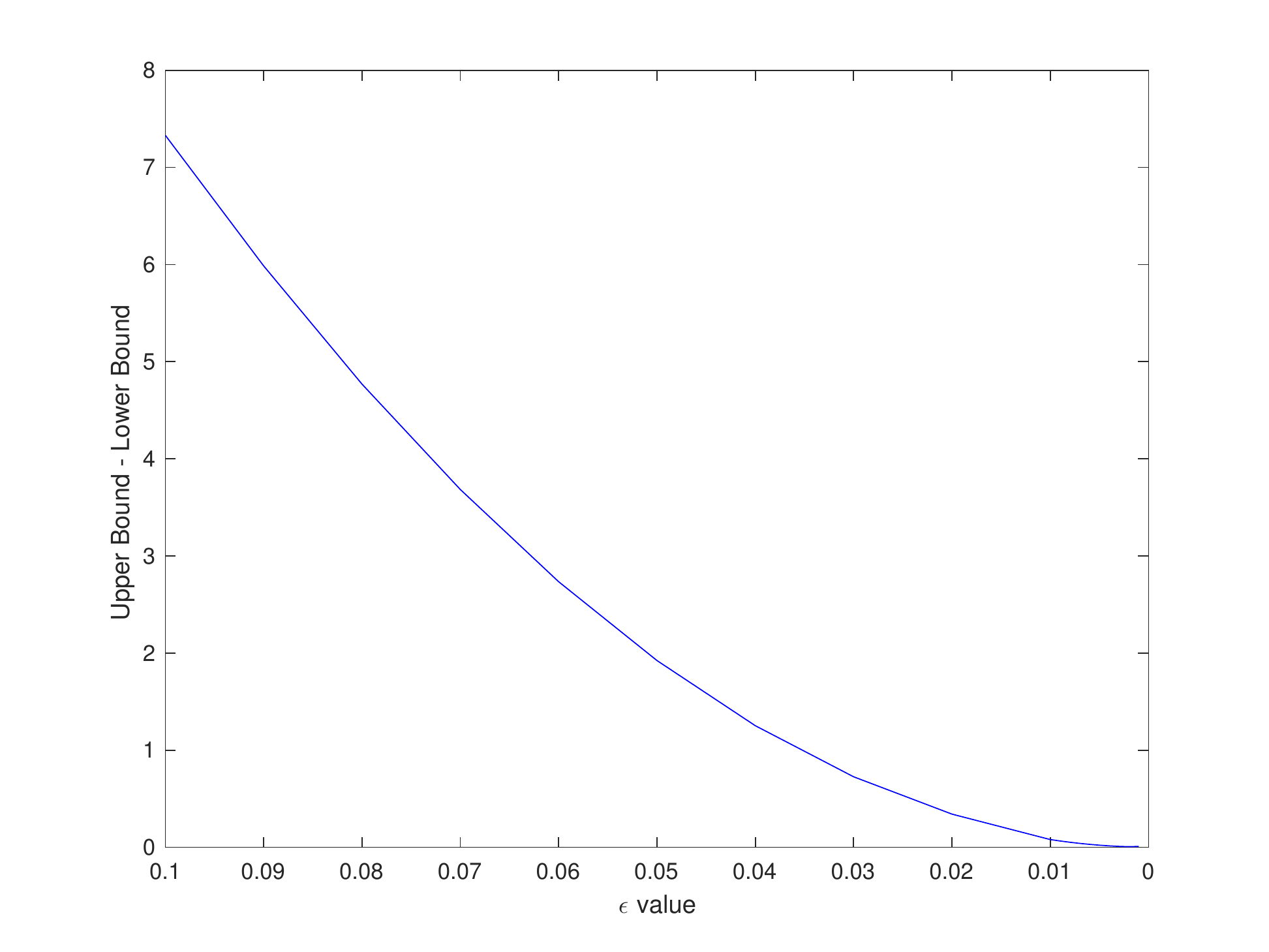}
 \caption{Difference between upper and lower bound as $\epsilon \rightarrow 0$.}
 \label{fig2}
  \end{figure}
  
\subsection{Example 2}
In this example, we consider an optimal control problem over the clustered consensus network in Fig. 3 with 20 nodes and 4 clusters.  This network was first considered in \cite{Boker} and \cite{Boker2}. The singular perturbation parameter is taken to be $\epsilon~ = ~0.25$. This parameter serves as the clustering parameter (see \cite{Boker}) associated with the network and represents the ratio of the number of internal connections to the number of external connections over all areas.
 \begin{figure}[h!]
\begin{center} \label{figd3}
\includegraphics[scale=0.4]{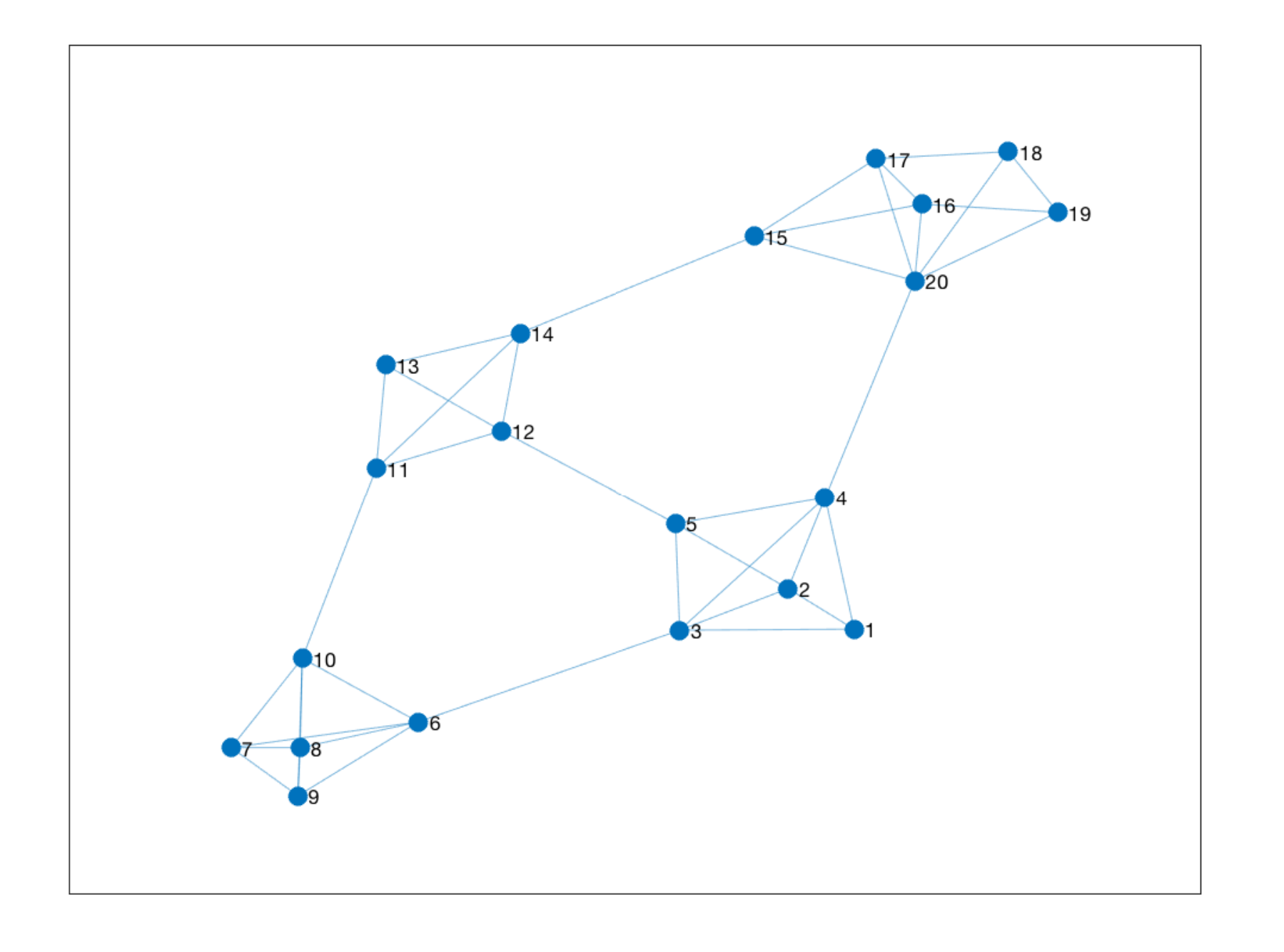}
\caption{20 node, 4 cluster network}
\end{center}
\end{figure}
The time horizon is 10 sec and the matrices in the objective functional $J_\bold{P}$ are given as $Q = I_{n}$, $R=I_{n}$, and $\pi = I^\delta \in\mathds{R}^{n \times n}$ where $n=20$. The matrices $A_{ij}$ and $b_i$ for $i,j=1,2$ are found using the method described in \cite{Boker} and \cite{Boker2} where the original control coefficient associated to each of the 20 nodes is 1.  The initial conditions for the nodes  were obtained randomly and are given by 
\begin{align} \label{initialcond}\begin{split} v(0)^T= [& 0.316, \hspace{0.5mm}
   0.959, \hspace{0.5mm}
    0.499, \hspace{0.5mm}
    0.739, \hspace{0.5mm}
    0.013, \hspace{0.5mm}
    0.605, \hspace{0.5mm}
   0.577, \\
   & 0.807,\hspace{0.5mm}
   0.655,  \hspace{0.5mm}
    0.878, \hspace{0.5mm}
    0.902, \hspace{0.5mm}
    0.152,\hspace{0.5mm}
        0.193,\hspace{0.5mm}
    0.791,\\
    &0.061, \hspace{0.5mm}
    0.39, \hspace{0.5mm}
    0.3,  \hspace{0.5mm}
    0.734, \hspace{0.5mm}
    0.104, \hspace{0.5mm}
    0.793]^T.
    \end{split}
    \end{align}
We assume the clustering parameter $\epsilon \rightarrow 0$ as the number of nodes in the network and the number of internal connections goes to infinity.   In this case, the matrices in the objective functional  of the reduced problem $\bar{J}_\bold{P}$ are given as $Q = I_{4}$, $R=I_{20}$, and $\pi = I^\delta \in\mathds{R}^{4 \times 4}$. The matrices in the dynamics are given by
\begin{align*}
 \mathcal{A} &= \begin{bmatrix}  -0.8000   & 0.2667 &   0.2667  &  0.2667\\
    0.2667  & -0.5333 &   0.2667      &   0\\
    0.3333 &   0.3333 &  -1 &   0.3333\\
    0.2222   &      0   & 0.2222 &  -0.4444 \end{bmatrix},\\
    \mathcal{B} & = \begin{bmatrix} 0.2667&0.2667&0.3333&0.2222 \end{bmatrix} V,
\end{align*}
where $V = blkdiag(\textbf{1}_{1\times 5}, \textbf{1}_{1\times 5}, \textbf{1}_{1\times 4}, \textbf{1}_{1\times 6})$ and $\textbf{1}_{i\times j}$ is the $i \times j$ matrix of ones.   The initial condition is given by
\begin{equation*}
[0.316, \hspace{0.5mm}
   0.959, \hspace{0.5mm}
    0.499, \hspace{0.5mm}
    0.739]^T.
\end{equation*}
We leave the computation of the matrix $T$ in \eqref{tmatrix}, the eigenvalues in \eqref{tlambda} and the constants in \eqref{cconstant} and \eqref{cconstant2} to the reader.  These values are easily obtained from the data available. 

 The solution to the original optimal control problem, the approximation using the method in \cite{OMalleyKung}, and our upper and lower bounds  are recorded in  Table \ref{Research Plan}.  
 \begin{table}[h]
\caption{Solution, reduced approximation and bounds for $\epsilon =0.25$.} \label{Research Plan} 
\begin{center}
\begin{tabular}{|c|c|c |c|} 
\hline
Solution & Reduced  &Upper Bd & Lower Bd\\ 
\hline
1.6925&1.4132 &1.7187 &1.5457\\
\hline\end{tabular}
\end{center}
\end{table}

Table \ref{Research Plan} illustrates the criterion for evaluating how good the reduced  approximation is.  It is clear that in this example, both the upper and lower bounds yield better approximations to the optimal solution than the approximation obtained in \cite{OMalleyKung}.  \subsection{Example 3} 
In our last example, we consider another consensus network with $68$ nodes and $4$ clusters with $\epsilon =0.0125$.  The reduced problem is obtained by assuming that both the number of nodes in the network and the number of internal connections goes to infinity.  The details of the optimal control problem are given in Example 2 with $n=68$ and a time horizon of 10 sec.   The first 20 initial conditions are in \eqref{initialcond} and the remaining values are chosen randomly.    The results are summarised in Table \ref{ResearchPlan2}. 
 \begin{table}[h]
\caption{Reduced approximation and bounds for $\epsilon =0.0125$.} \label{ResearchPlan2} 
\begin{center}
\begin{tabular}{|c|c |c|} 
\hline
 Reduced  &Upper Bd & Lower Bd\\ 
\hline
0.6546& 0.7386&0.7102 \\
\hline\end{tabular}
\end{center}
\end{table}

In this case, the solver was not able to compute the solution to the optimal control problem.  However,  we obtained the reduced solution and the upper and lower bounds.    It is clear that our result is applicable in the cases when the solution to an optimal control problem is infeasible and furthermore, provides more information to the practitioner when choosing an approximation to the solution. Although the dimension of the network in both Example 2 and 3, and corresponding optimal control problem, is small, obtaining upper and lower bounds on the solution, instead of  using the approximation given by the reduced problem can useful for implementation for any value of $\epsilon$.

\section{Conclusion}
We have developed a methodology to compute an arbitrarily tight upper bound $\chi_u^N$ and lower bound $\chi_l^N$ on the solution of a SPOC problem satisfying 
\begin{equation*}
|\chi_i^N(\epsilon) -\chi_l^N(\epsilon)| = O(\epsilon^{N+1}), \hspace{4mm} \mbox{as } \epsilon \rightarrow 0,
\end{equation*}
for any positive integer $N$.   Our methodology is based on the  construction of  both a dual SPOC problem with a strong duality property and a reduced dimension problem.  From the optimal control of the reduced problem, we construct an approximate control that is asymptotically equivalent in $\epsilon$ to the solution of the primal  SPOC problem.  To obtain an arbitrarily tight upper bound, we evaluate the differential equations of the primal problem with this approximate control. To obtain an arbitrarily tight lower bound, we  construct a control that is asymptotically equivalent to the optimal control of the dual problem using the reduced dimension problem and evaluate the differential equations in the dual problem with this constructed control.  It is clear from our results that obtaining the upper and lower bounds significantly improves the amount of information available to the practitioner as the  bounds hold for all $\epsilon$ and hence provide, for all $\epsilon$,  a criterion that determines the quality of any approximation to the solution.

\ifCLASSOPTIONcaptionsoff
  \newpage
\fi



%
\bibliographystyle{IEEEtran}

%
\vspace{-1cm}
\begin{IEEEbiography}[{\includegraphics[width=1in,height=1.25in,clip]{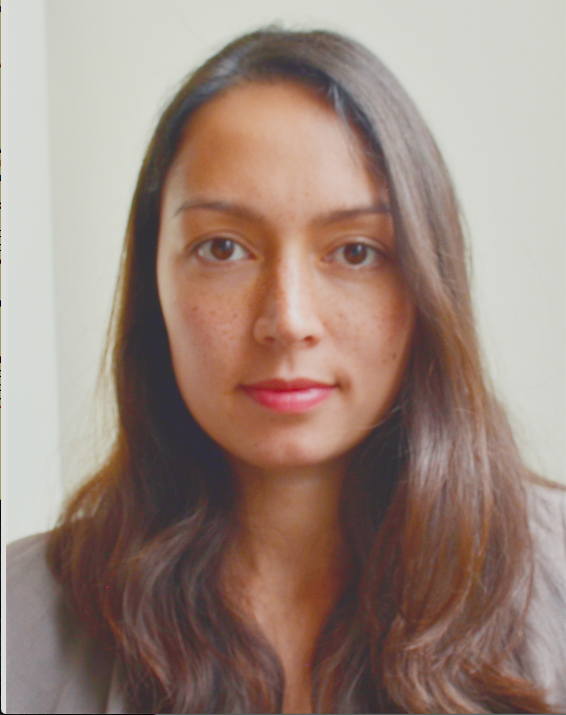}}]{Sei Howe}
received the B.A degree in mathematics from Reed College in 2011 and the M.Sc degree in pure mathematics from Imperial College London in 2012. She is currently a PhD candidate with the Department of Computer Science, Imperial College London.   

Her research interests include asymptotic analysis and singular perturbation theory with applications to optimal control and consensus networks.
\end{IEEEbiography}
\vspace{-0.5cm}
\begin{IEEEbiography}[{\includegraphics[width=1in,height=1.25in,clip]{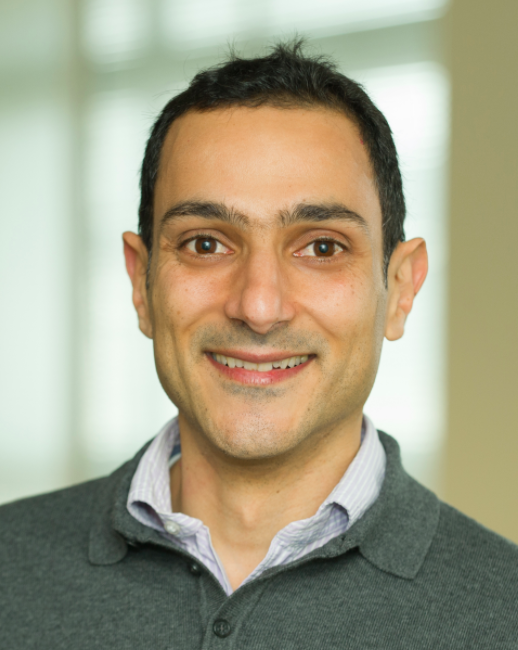}}]{Panos Parpas}
Panos Parpas is a Senior Lecturer in the Computational Optimization
Group of the Department of Computing at Imperial
College London. He is also a member of the Centre
for Process Systems Engineering at Imperial College London.
Before that, he was a research fellow at the Massachusetts Institute
of Technology (2009-2011). Dr. Parpas is interested in
the development of computational optimization methods.
He is particularly interested in multiresolution algorithms and decision making under uncertainty.
\end{IEEEbiography}
 \vfill 




\end{document}